\DeclareMathOperator{\pic}{Pic}
\DeclareMathOperator{\Hom}{Hom}
\DeclareMathOperator{\rank}{Rk}
\DeclareMathOperator{\ord}{ord}
\DeclareMathOperator{\chara}{char}
\newtheorem{theorem}{Theorem}
\newtheorem{proposition}{Proposition}
\newtheorem{corollary}{Corollary}
\newtheorem{lemma}{Lemma}
\newtheorem{conjecture}{Conjecture}
\theoremstyle{definition}
\newtheorem{definition}{Definition}
\newtheorem{remark}{\textbf{Remark}}
\newtheorem{example}{\textbf{Example}}
\begin{document}

\date{}

\title[Moduli of abelian covers of elliptic curves]{Moduli of abelian covers of elliptic curves}

\subjclass[2010]{Primary: 14H10; Secondary: 14D99, 14H52, 14K10, 14N35}
\keywords{Moduli of curves, Covering spaces, Elliptic curves, Algebraic Curves, Hurwitz spaces, Moduli stacks, Picard groups, Kodaira dimension}

\author{Nicola Pagani}
\address{Department of Mathematical Sciences, The University of Liverpool, United Kingdom\\}
\email{pagani@liv.ac.uk}

\begin{abstract} For any finite abelian group $G$, we study the moduli space of abelian $G$-covers of elliptic curves, in particular identifying the irreducible components of the moduli space. We prove that, in the totally ramified case, the moduli space has trivial rational Picard group, and it is birational to the moduli space $M_{1,n}$, where $n$ is the number of branch points. In the particular case of moduli of bielliptic curves, we also prove that the boundary divisors are a basis of the rational Picard group of the admissible covers compactification of the moduli space.
Our methods are entirely algebro-geometric.
 \end{abstract}
\maketitle

\section{Introduction}

\subsection{Summary of our results}The purpose of this paper is to study the geometry of moduli spaces of abelian covers of elliptic curves. An abelian cover is a cover which is generically a principal $G$-bundle for $G$ a certain fixed finite abelian group; from this description abelian covers are a natural generalization of double covers. Our original motivating example is indeed the moduli space of bielliptic curves.

To describe the moduli spaces, we use the powerful language of moduli of stable maps to the classifying stack $BG$, first discovered in \cite{abvis2} and explained in more detail in \cite{acv} by Abramovich, Corti and Vistoli. In general, the moduli spaces that we study here are slightly different from the one na\"ively described in the first paragraph, in that the action of $G$ on the covering curve $\tilde{C}$ is fixed, and the branch points are ordered (moreover, we also allow ourselves to mark and order other points of the quotient $\tilde{C}/G$). These moduli spaces admit a forgetful map, which only remembers the covering curve, to closed subspaces of the moduli spaces of curves.

For example, in the case of the moduli space of bielliptic curves, our results are naturally presented for moduli whose objects parametrize tuples $(\tilde{C}, \alpha, x_1, \ldots, x_n)$, where $\tilde{C}$ is a genus-$g$ curve, $\alpha$ is a distinguished involution such that $\tilde{C}/\langle \alpha \rangle$ is a genus-$1$ curve and $\{x_1, \ldots, x_n\}$ is the branch locus of the map $\tilde{C} \to \tilde{C}/\langle \alpha \rangle$. By quotienting this moduli space by the action of the symmetric group $\mathfrak{S}_n$ on the points $x_i$, we retrieve the moduli of bielliptic curves without an ordering of the branch points. On the other hand, this moduli space admits a finite map to the moduli space where the involution is only required to exist and it is not fixed (for genus $g \geq 6$ there is at most a unique bielliptic involution, so such a finite map is in fact an isomorphism), the latter is naturally a codimension-$(g-1)$ moduli subscheme of $\mathcal{M}_g$.

We now summarize the main results of our work. In Theorem \ref{fondamentale}, we give a geometric description of the irreducible components of the moduli spaces of abelian $G$-covers of elliptic curves. From this description, we are able to deduce further results for those components that parametrize \emph{totally ramified} covers. In fact, in Section \ref{birationalgeo}, we prove that each component of the moduli space parametrizing totally ramified abelian $G$-covers is birational to $\mathcal{M}_{1,n}$ where $n$ is the number of marked points (in general, a superset of the set of branch points). In Theorem \ref{vanishes}, we exploit our description to prove the vanishing of the rational Picard group of such components of the moduli space, and to find the number of independent relations among the boundary divisors of the admissible covers compactification. In the last section we make our results more explicit in the case of the rational Picard group of the moduli spaces of admissible bielliptic curves, see Corollaries \ref{linindip} and \ref{numerico}.

The Picard number of the compact moduli of hyperelliptic curves $\overline{\mathcal{H}}_{g}$ is $g$ \cite{ch}. In Corollary \ref{linindip}, we see that, when $g>2$, it is $2g$ for the compact moduli $\overline{\mathcal{B}}_{g}$ of bielliptic curves  (without ordering the branch). A simple analysis of the number of boundary divisors of the compact moduli of double covers of genus-$h$ curves leads us to conjecture that the Picard number of the latter moduli space grows is $(h+1)g + g \cdot o(1)$ for $g \to \infty$. %The presence of pull-backs of $\lambda$-classes from moduli of pointed, genus-$h$ curves makes it difficult to formulate a reasonable conjecture for a fixed value of $g$.
 %the Picard number of the latter moduli space grows to infinity as $(h+1)g$
%for sufficiently high $g$, although one may expect $\lambda$-classes to play a role in a different direction.

\subsection{Literature and the state of the art}
Our moduli spaces are generalizations of moduli of hyperelliptic curves. The geometry of the latter has been thoroughly investigated; we know that they are rational (\cite{katsylo}), and we have a description of their Picard group (\cite{ch}, \cite{arsievistoli}). We address the reader to Section \ref{genus0}, in which we review some known results, relevant for this paper, regarding the geometry of moduli of abelian covers of genus-$0$ curves and their compactification.

There is a large body of work concerning the birational geometry of moduli of bielliptic curves due to Bardelli, Casnati and Del Centina; see \cite{cace} and the references therein. The rational Picard group of moduli of bielliptic curves was investigated by the author in the cases of $g=2$ \cite{pagani2} and $g=3$ \cite{paganitommasi} (with Tommasi). In \cite{faberpagani} (with Faber), we investigate the problem of enumerating bielliptic curves of genus $3$.  The geometry of the Hurwitz scheme of covers of positive genus curves is studied in \cite{ghs}, and the moduli spaces that we are studying are closed subspaces of these Hurwitz schemes, of increasingly high codimension as the order of $G$ increases.

 As we are treating the case of curves with trivial canonical bundle, when $G$ is cyclic the objects we are considering are twisted higher level Prym or Spin curves of genus $1$. The literature on such moduli spaces and on the problem of compactifying them is vast; we address the reader to the introduction of \cite{ccc} and the references therein. Our main results are similar to those obtained by Bini-Fontanari \cite{bf1}, \cite{bf2}; we remark however that there is no intersection between our results and theirs, as their interest is focused on the component of the moduli spaces that generically parametrizes \'etale double covers. Let us remark furthermore that our results are entirely algebro-geometric and hold more generally for $\chara k \nmid |G|$.

 Our approach of treating moduli of decorated elliptic curves by means of moduli of stable maps to a stack is similar in flavour to the works \cite{petersen} and \cite{niles}; our motivation and main results are orthogonal to theirs.

The results proven in this paper have a non-trivial intersection with a recent paper by Poma-Talpo-Tonini. In \cite{ptt}, they compute the integral Picard group of the stack of uniform $G$-covers, without an ordering of the branch locus, without any restriction on the genus of the target curve. Their preprint appeared approximately one year after the preprint version of this paper, and the methods are very different.

 \subsection{Further directions}
We propose two future directions to further motivate this work.

Firstly, the moduli spaces $\overline{\mathcal{M}}_{1,n}(BG)$ describe the puntual picture of the moduli spaces of stable maps to any (abelian) orbifold, and our work is therefore a first step towards studying genus-$1$ Gromov-Witten theory for orbifolds. The study of the Gromov-Witten theory with genus-$1$ source curve has seen a lot of progress in the latest years, due to the work of Zinger and his coauthors, see \cite{zinger} and the references therein.

Secondly, we know that, on the totally ramified components of the compactified moduli spaces of abelian covers of elliptic curves, the pull-back of $\lambda$ can be expressed in terms of boundary divisors. In a forthcoming work we plan to find the explicit such expression, and to study the slope of $1$-dimensional families, as Cornalba and Harris have done in \cite{ch} for hyperelliptic families, in the hope of finding new lower bounds for the slope of divisors on $\overline{\mathcal{M}}_g$.  The attempt to find new lower bounds for the slope via moduli of covers of genus-$1$ curves has already been pursued by D.Chen \cite{chen}, again however the $1$-dimensional families he considers do not enter in our framework as they do not parametrize abelian covers. Barja has studied bielliptic fibrations in \cite{barja} and he has proven that the sharp upper bound for the slope is $8$.

\subsection{Notation} \label{notation}
In this paper, $G$ will always be a fixed finite abelian group. We work over an algebraically closed field $k$ of characteristic not dividing $|G|$, the order of the group $G$. With $G^{\vee}$ we denote the group of characters $\Hom(G, \mathbb{G}_m)$, where $\mathbb{G}_m$ is the group of invertible elements of $k$.

We fix once and for all compatible generators $\omega_l$ of the primitive $l$-roots of $1$, for each $l$ dividing $|G|$, where compatible means $\omega_l^{l/s}=\omega_s$ if $s$ divides $l$. The choice of a basis $\langle g_1\rangle \oplus \ldots \oplus \langle g_k \rangle =G$ determines the choice of a dual basis $\langle g_1^{\vee}\rangle \oplus \ldots \oplus \langle g_k^{\vee} \rangle =G^{\vee}$, where
\begin{displaymath} g_j^{\vee}(g_i)= \begin{cases} \omega_{|\langle \chi_i\rangle|} & j=i \\ 1 & j \neq i. \end{cases} \end{displaymath}
Conversely, the choice of a basis of $G^{\vee}$ determines a dual basis of $G$, our assumption thus gives a canonical isomorphism $G \to G^{\vee}$.

%Unless otherwise specified, we will always work with Picard groups with \emph{rational} (rather than integral) coefficients, and with moduli \emph{stacks} (rather than spaces).

Throughout the paper we use the letter $g$, respectively $h$, for the genus of the covering curve $\tilde{C}$, respectively for the genus of the covered curve $C$.

\section{Moduli spaces of abelian covers of curves}

\subsection{General setting}
The main object of study of this paper are moduli spaces of abelian covers of smooth curves of fixed genus. We fix a finite abelian group $G$, and  integers $h,n$ such that $2h-2+n>0$. For convenience, we introduce an ordering of the set of branch points; moreover, we admit the possibility of marking further points inside the covered curve. The reference for this section is \cite{acv}, a specialization of the more general theory developed in \cite{abvis2}.

\begin{definition} \label{plain} We define the moduli stack $\mathcal{M}_{h,n}(B G)$ of abelian $G$-covers, whose geometric points are tuples $( (C,x_1, \ldots, x_n), \ p \colon \tilde{C} \to C )$, satisfying the following.
\begin{enumerate}
\item The datum $(C,x_1, \ldots, x_n)$ is an irreducible, $n$-pointed smooth projective curve of genus $h$.
\item The curve $\tilde{C}$ is smooth and projective (but not necessarily irreducible).
\item The morphism $p \colon \tilde{C} \to C$ is a cover (it is finite, flat, of finite presentation), and $G$ acts on $\tilde{C}$ so that $p$ is $G$-invariant.
%\item The resulting composite morphism $\tilde{C} \to S$ is smooth.
\item The restriction \begin{displaymath}p^{gen} \colon p^{-1}(C \setminus \{x_1, \ldots, x_n\}) \to C \setminus \{x_1, \ldots, x_n \}\end{displaymath} is an \'etale $G$-torsor.
\end{enumerate}
%The morphisms are morphisms between target curves that \emph{admit} a $G$-equivariant lift.
\end{definition}

There is a modular compactification made with admissible $G$-covers.

\begin{definition} \label{admissible} We define the moduli stack $\overline{\mathcal{M}}_{h,n}(BG)$ of admissible $G$-covers, whose geometric points are tuples $( (C,x_1, \ldots, x_n), \ p \colon \tilde{C} \to C )$, satisfying the following.
\begin{enumerate}
\item The datum $(C,x_1, \ldots, x_n)$ is a stable, $n$-pointed curve of arithmetic genus $h$.

\item The curve $\tilde{C}$ is nodal and projective (but not necessarily irreducible, nor stable).

\item The morphism $p \colon \tilde{C} \to C$ is a cover, and $G$ acts on $\tilde{C}$ so that $p$ is $G$-invariant.

\item Let $C^{sm}$ be the open locus where $C$ is smooth, then the restriction \begin{displaymath}p^{gen} \colon p^{-1}(C^{sm} \setminus \{x_1, \ldots, x_n\}) \to C^{sm} \setminus \{x_1, \ldots, x_n \}\end{displaymath} is an \'etale $G$-torsor. The image via $p$ of each node of $\tilde{C}$ is a node of $C$.
\item \emph{Smoothability}: for any node $q$ of $\tilde{C}$, the stabilizer of the $G$-action at $q$ acts with inverse characters on the tangent spaces at the two branches of $q$.
\end{enumerate}
%The morphisms are morphisms between target curves that \emph{admit} a $G$-equivariant lift.
\end{definition}

\begin{remark} \label{stackyremark} In Definitions \ref{plain} and \ref{admissible} it is standard to define the morphisms to be the cartesian diagrams. For later convenience (Theorem \ref{fondamentale}) in this paper we always work with the \emph{rigidified} moduli stacks (see \cite[Section 5]{acv}). For example, morphisms of $\mathcal{M}_{h,n}(B G)$ are morphisms between target curves that \emph{admit} a $G$-equivariant lift.

%The stack obtained in this second way is a $G$-banded gerbe over the stack we are working with (which is then a rigidification of the former).
\end{remark}

The moduli stack $\mathcal{M}_{h,n}(BG)$ is open and dense in $\overline{\mathcal{M}}_{h,n}(BG)$; the latter is a smooth proper Deligne--Mumford stack with projective coarse moduli space (see \cite{acv}, \cite{abvis2}).  We will mainly be working in the case when $h=1$; we will investigate geometric properties of the moduli spaces such as irreducibility, Kodaira dimension and rational Picard group.

\subsection{Moduli stacks via abelian covers theory} \label{abeliancovers}
This section is inspired by \cite{pardini}. We take the chance here to fix our notation for abelian covers. The reason for introducing the theory only in dimension one is twofold. We can conveniently work in moduli since at the $1$-dimensional level we have a moduli space parametrizing couples $(X,D)$ where $X$ is a variety and $D$ is a smooth effective divisor on it. Moreover, we can avoid issues of normalization (cf. \cite[Section 3]{pardini}) of the total space of the cover.

 To any object of $\mathcal{M}_{h,n}(BG)$, we can associate discrete invariants besides $h$ and $n$. There are $n$ evaluation maps (see \cite[Section 4.4]{agv2}; they write $\mathcal{K}$ in place of $\mathcal{M}$)
\begin{displaymath}
ev_j \colon \mathcal{M}_{h,n}(BG) \to I(BG),
\end{displaymath}
where $I(BG)$ is the inertia stack (\cite[Section 3]{agv2}) of $BG$:
\begin{equation} \label{ibg}
I(BG)= \{(H, \psi), \  H \ \textrm{a cyclic subgroup of } G, \ \psi  \textrm{ a generator of } H^{\vee} \}.
\end{equation}
Geometrically, $ev_j$ of a $G$-cover is $(H_j,\psi_j)$, where $H_j$ is the stabilizer of the action of $G$ on any of the fibers over $x_j$ (a cyclic group), and $\psi_j$ is the character of the induced action of $H_j$ on the cotangent space on any of the fibers over $x_j$. The stabilizer $H_j$ is the trivial group exactly when $x_j$ is not in the branch locus of the $G$-cover.

From now on, having a convention on the roots of unity (see Section \ref{notation}) we can naturally identify $G\setminus \{ 0\}$ with $I(BG)$, by associating $h_j$ with $H_j:= \langle h_j \rangle$ and $\psi_j(h_j):= \omega_{|\langle h_j \rangle|}$. Once the discrete data $h_1, \ldots, h_n$ are fixed, we can ask if the open and closed component of $\mathcal{M}_{h,n}(BG)$
\begin{equation} \label{modulistack}
\mathcal{M}_{h,n}(BG)\left( (h_1), \ldots, (h_n) \right):= \bigcap_{i=1}^n \ ev_j^{-1} (h_j)
\end{equation}
is irreducible. To accomplish this, we will express the moduli in an alternative way, building on Pardini's description of abelian covers \cite{pardini}.

We incidentally observe that the genus of the covering curve is a redundant invariant. Indeed, it is expressible in terms of the other invariants via the Riemann-Hurwitz formula:
\begin{equation} \label{rh}
2g-2=|G|(2h-2)+ \sum_{j=1}^n |G|(1-|\langle h_j \rangle|^{-1}).
\end{equation}

Let $((C,x_1,\ldots,x_n), p \colon \tilde{C} \to C)$ be an object of \eqref{modulistack}, namely an abelian $G$-cover. The vector bundle $p_*\mathcal{O}_{\tilde{C}}$ naturally splits as a sum of line bundles $L_{\chi}^{-1}$ for $\chi \in G^{\vee}$. For any $\chi \in G^{\vee}$, we define $a_{\chi}^j$ as the smallest natural number such that \begin{equation} \label{aij} \chi_{| \langle h_j \rangle|}= \psi_j^{a_{\chi}^{j}}.\end{equation} Then, for $\chi_1, \chi_2 \in G^{\vee}$, and for $j \in \{1, \ldots, n\}$, we define the natural number \begin{displaymath}\epsilon_{\chi_1, \chi_2}^{j}:=\lfloor \frac{a_{\chi_1}^{j}+ a_{\chi_2}^{j}}{|\langle h_j \rangle|}  \rfloor= \begin{cases} 0 &\textrm{if } a_{\chi_1}^j + a_{\chi_2}^j< |\langle h_j \rangle|, \\ 1 &\textrm{otherwise.}\end{cases}\end{displaymath}  With this notation, the multiplication in the $\mathcal{O}_C$-algebra $p_*\mathcal{O}_{\tilde{C}}$ induces isomorphisms
\begin{equation} \label{compa}
\phi_{\chi_1, \chi_2} \colon L_{\chi_1} \otimes L_{\chi_2} \cong L_{\chi_1 \chi_2} \otimes \bigotimes_{j=1}^n \mathcal{O}_C\left(\epsilon_{\chi_1, \chi_2}^{j} x_j\right)  \quad \textrm{for all } \chi_1,\chi_2 \in G^{\vee}.
\end{equation}
Pardini proves in \cite[Theorem 2.1]{pardini} that conversely, if $(C, x_1,\ldots, x_n)$ is given, the abelian $G$-cover $p\colon \tilde{C} \to C$ can be reconstructed up to a $G$-equivariant isomorphism from the set of data that appear in \eqref{compa}. That such a reconstruction is possible in the relative case over a scheme follows from the more general recent result \cite[Theorem 4.40]{tonini} by Tonini.

\begin{proposition}  \label{equivalentdesc} The moduli stack \eqref{modulistack} is isomorphic to the moduli stack of objects \begin{displaymath}((C, x_1, \ldots, x_n), \{L_{\chi}\}_{\chi \in G^{\vee}}, \{\phi_{\chi_1, \chi_2}\}_{\chi_1, \chi_2 \in G^{\vee}}).\end{displaymath} Here $(C, x_1, \ldots, x_n)$ is a smooth $n$-pointed curve of genus $h$, the $L_{\chi}$ are line bundles on $C$, the $\phi_{\chi_1, \chi_2}$ are isomorphisms of line bundles and the data satisfy relations \eqref{compa}. The morphisms are morphisms of pointed curves that admit  lifts at the level of line bundles, such that the lifts are compatible with the constraints \eqref{compa} (see Remark \ref{stackyremark2}).
\end{proposition}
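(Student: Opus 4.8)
The plan is to exhibit a pair of mutually inverse functors between the moduli stack \eqref{modulistack} and the stack of tuples $((C,x_1,\ldots,x_n),\{L_\chi\},\{\phi_{\chi_1,\chi_2}\})$, thereby recovering Pardini's correspondence \cite{pardini} in families. First I would construct the forward functor. Given an abelian $G$-cover $p\colon \tilde C \to C$, the sheaf of $\mathcal{O}_C$-algebras $p_*\mathcal{O}_{\tilde C}$ carries a fibrewise $G$-action; since $\chara k \nmid |G|$, the group algebra is semisimple and the action decomposes $p_*\mathcal{O}_{\tilde C}$ into character eigensheaves $\bigoplus_{\chi \in G^\vee} L_\chi^{-1}$. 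Over $C^{sm}\setminus\{x_1,\ldots,x_n\}$ the map is an \'etale $G$-torsor, so each eigensheaf is invertible there; flatness of $p$ together with the torsion-freeness of $p_*\mathcal{O}_{\tilde C}$ then force each $L_\chi$ to be a line bundle on all of $C$. The algebra structure supplies the multiplication maps $L_{\chi_1}^{-1}\otimes L_{\chi_2}^{-1}\to L_{\chi_1\chi_2}^{-1}$, and inverting these yields the candidate isomorphisms $\phi_{\chi_1,\chi_2}$.

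The key computation is to pin down the precise twist by the branch divisor, i.e. to show the $\phi_{\chi_1,\chi_2}$ take the form \eqref{compa} with the stated exponents $\epsilon_{\chi_1,\chi_2}^j$. This is a local question at each $x_j$: by Cartan's lemma the cover is cyclic and totally ramified there, with group $\langle h_j \rangle$ of order $m_j$, so in suitable local coordinates $t=s^{m_j}$ and the $\chi$-eigensheaf is generated by $s^{a_\chi^j}$, where $a_\chi^j$ is the exponent defined in \eqref{aij} via the character $\psi_j$ recorded by the evaluation map. Multiplying local generators gives $s^{a_{\chi_1}^j + a_{\chi_2}^j}$, and rewriting $a_{\chi_1}^j + a_{\chi_2}^j = a_{\chi_1\chi_2}^j + m_j\,\epsilon_{\chi_1,\chi_2}^j$ extracts exactly one factor of $t=s^{m_j}$ precisely when the sum reaches $m_j$; this is the origin of the twist $\mathcal{O}_C(\epsilon_{\chi_1,\chi_2}^j x_j)$. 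Assembling these local models globally produces the isomorphisms \eqref{compa}. I expect this local-to-global bookkeeping of the ramification exponents to be the main technical obstacle, since it requires carefully matching the character $\psi_j$ furnished by the evaluation maps with the genuine local $\langle h_j\rangle$-action on the cotangent space.

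For the inverse functor, given the data I would form the quasicoherent $\mathcal{O}_C$-algebra $\mathcal{A}:=\bigoplus_{\chi\in G^\vee} L_\chi^{-1}$ with multiplication defined by the $\phi_{\chi_1,\chi_2}$; associativity and commutativity follow from the cocycle-type compatibilities among the $\phi$'s, which I would record as the constraints alluded to in Remark \ref{stackyremark2}. Setting $\tilde C := \mathrm{Spec}_C\,\mathcal{A}$ produces a finite flat cover carrying a $G$-action whose ring of invariants recovers $\mathcal{O}_C$, and one then verifies, using the same local model $t=s^{m_j}$, that $\tilde C$ is smooth and that the stabiliser together with the cotangent character at a point over $x_j$ are exactly $(h_j)$. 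Finally I would check that the two constructions are mutually inverse and compatible with the two notions of morphism, so that the assignment is an isomorphism of stacks and not merely a bijection on objects; once the objects are matched, the identification of morphisms on each side with $G$-equivariant, respectively $\phi$-compatible, lifts makes this comparison formal.
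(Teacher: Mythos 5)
Your proposal is correct and follows essentially the same route as the paper, which simply cites Pardini's Theorem 2.1 and sketches the character-eigensheaf decomposition of $p_*\mathcal{O}_{\tilde C}$, the local ramification computation producing the twists $\epsilon_{\chi_1,\chi_2}^j$, and the reconstruction of $\tilde C$ as the relative $\mathrm{Spec}$ of the algebra built from the $L_\chi$ and $\phi_{\chi_1,\chi_2}$. You fill in more of the local bookkeeping at the branch points than the paper does, but the argument is the same one.
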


\begin{remark} \label{stackyremark2} Here is a more explicit description of the morphisms of the second moduli stack mentioned in Proposition \ref{equivalentdesc} (see also Remark \ref{stackyremark}). Let $(C, x_i, L_{\chi},\phi_{\chi_1, \chi_2})$ and $({C}', {x}_i', {L}_{\chi}', {\phi}'_{\chi_1, \chi_2})$ be two objects. Then a morphism is a morphism of pointed curves $\sigma \colon (C, x_i)\to ({C}', {x}'_i)$, such that for all $\chi \in G^{\vee}$, there exists  $\tau_{\chi} \colon \sigma^*{L'}\to L$ that makes the following diagram commute:
\begin{equation} \label{diagrammone}
\xymatrix{ \sigma^*({L'}_{\chi_1} \otimes {L'}_{\chi_2})  \ar[rr]^{\hspace{-1.3cm}\sigma^*\left({\phi'}_{\chi_1, \chi_2}\right)} \ar[d]^{\tau_{\chi_1} \otimes \tau_{\chi_2}} & & \sigma^*\left({L'}_{\chi_1 \chi_2} \otimes \bigotimes_{j=1}^n \mathcal{O}_{{C'}}\left(\epsilon_{\chi_1, \chi_2}^{j} {x'}_j\right)\right) \ar[d]^{\sigma^* \left(\tau_{\chi_1 \chi_2}\right) \otimes \gamma} \\
L_{\chi_1} \otimes L_{\chi_2} \ar[rr]^{\hspace{-1.3cm}\phi_{\chi_1, \chi_2}} && L_{\chi_1 \chi_2} \otimes \bigotimes_{j=1}^n \mathcal{O}_C\left(\epsilon_{\chi_1, \chi_2}^{j} x_j\right)}
\end{equation}
where $\gamma \colon \mathcal{O}_{{C'}}\left(\epsilon_{\chi_1, \chi_2}^{j} {x'}_j\right) \to \mathcal{O}_C\left(\epsilon_{\chi_1, \chi_2}^{j} x_j\right) $ is the morphism induced by $\sigma$.
A different definition for a morphism between the two families would be to fix a pair $(\sigma, \tau)$ as above. As we have already observed in Remark \ref{stackyremark}, the resulting moduli stack is a $G$-banded gerbe over the stack we are working with.
\end{remark}

\begin{remark} \label{etaleremark} In the case of \'etale $G$-covers, Proposition \ref{equivalentdesc} simply says that principal $G$-bundles over a fixed $C$ are classified topologically, up to $G$-equivariant isomorphisms, by $\Hom(G^{\vee}, Pic^0(C))$. Over $\mathbb{C}$, the latter abelian group is isomorphic to
\begin{displaymath}
 \Hom(H_1(C, \mathbb{Z}), G) \cong H^1(C, G),
\end{displaymath}
where the cohomology groups are taken in the analytic topology of $C$.
\end{remark}

There is a simpler description of the moduli stack \eqref{modulistack} if one pays the price of fixing a basis for $G$, \emph{i.e.} $G= \langle g_1 \rangle \oplus \ldots \oplus \langle g_l \rangle$, or equivalently, a dual basis for $G^{\vee}= \langle \chi_1\rangle \oplus \ldots \oplus \langle \chi_l \rangle$. Let us pose $d_i:=\ord (g_i)$. If the ramifications $h_1, \ldots, h_n$ are fixed, the homomorphism induced by the componentwise inclusion $\bigoplus \langle h_j \rangle \to G$ can be written, in the chosen bases, as
\begin{equation} \label{lambdaij}h_j = \sum_i \lambda_{i j} g_i, \quad \lambda_{i j}:= \frac{d_i \cdot a_{g_i^{\vee}}^{j}}{|\langle h_j \rangle|}, \end{equation}
where $a_{\chi}^j$ was defined in \eqref{aij}.

The line bundles $L_{\chi}$ that appear in Proposition \ref{equivalentdesc} can be reconstructed from the line bundles $L_{\chi_1}, \ldots, L_{\chi_l}$ by using \eqref{compa}. Tensoring $L_{\chi_i}$ with itself $d_i$ times, and replacing the indices $\chi_i$ with $i$, from \eqref{compa} we arrive at the compatibility conditions
\begin{equation}\label{reducedeq}
\phi_i \colon L_i^{\otimes d_i} \cong \bigotimes_{j=1}^n \mathcal{O}_C\left(\lambda_{i j} x_j \right), \quad i=1 \ldots, l.
\end{equation}
In \cite{pardini}, the set of line bundles $\{ L_i\}_{1 \leq i \leq l}$ takes the name of \emph{reduced building data} for the $G$-cover. After having chosen a basis for $G$, we can recast Proposition \ref{equivalentdesc}.

\begin{proposition}  \label{equivalentred} The moduli stack \eqref{modulistack} is isomorphic to the moduli stack whose objects are \begin{displaymath}((C, x_1, \ldots, x_n), \{L_i\}_{1 \leq i \leq l}, \{\phi_i\}_{1 \leq i \leq l} ).\end{displaymath} Here $(C, x_1, \ldots, x_n)$ is a smooth $n$-pointed curve of genus $h$, the $L_{i}$ are line bundles on $C$, the $\phi_i$ are isomorphisms of line bundles, and the data satisfy relations \eqref{reducedeq}. Morphisms are as in Proposition \ref{equivalentdesc} (cf. Remark \ref{stackyremark2}).
\end{proposition}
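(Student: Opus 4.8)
The plan is to exhibit mutually inverse functors between the stack of Proposition \ref{equivalentdesc} and the stack of reduced building data, exploiting the fact that $p_* \mathcal{O}_{\tilde{C}}$ carries the structure of a $G^\vee$-graded $\mathcal{O}_C$-algebra. One direction is immediate: given full building data $(\{L_\chi\},\{\phi_{\chi_1,\chi_2}\})$, I set $L_i := L_{\chi_i}$ and obtain the isomorphism $\phi_i$ of \eqref{reducedeq} by composing the structural isomorphisms $\phi_{\chi_i^{k},\chi_i}$ for $k = 1,\ldots,d_i-1$. The target is exactly $\bigotimes_j \mathcal{O}_C(\lambda_{ij}x_j)$ because the accumulated corrections telescope: using $a^j_{\chi_i^{d_i}}=0$ one gets $\sum_{k=1}^{d_i-1}\epsilon^j_{\chi_i^k,\chi_i} = d_i a^j_{\chi_i}/|\langle h_j\rangle| = \lambda_{ij}$, matching \eqref{lambdaij}. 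This assignment visibly respects morphisms in the sense of Remark \ref{stackyremark2}.

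The work is in constructing the inverse. Writing each character uniquely as $\chi = \chi_1^{b_1}\cdots\chi_l^{b_l}$ with $0 \le b_i < d_i$, I would define $L_\chi := \bigotimes_i L_i^{\otimes b_i} \otimes \bigotimes_j \mathcal{O}_C(-c^j_\chi x_j)$, where $c^j_\chi := (\sum_i b_i a^j_{\chi_i} - a^j_\chi)/|\langle h_j\rangle|$ is the nonnegative integer measuring the failure of the exponents $a^j$ from \eqref{aij} to be additive on the chosen generators. For a pair $(\chi_1,\chi_2)$ I define $\phi_{\chi_1,\chi_2}$ by multiplying out the two monomials, reducing each exponent modulo $d_i$ via the relation $\phi_i$ of \eqref{reducedeq}, and matching the bookkeeping divisors; the divisor produced is then precisely $\sum_j \epsilon^j_{\chi_1,\chi_2}x_j$, so that \eqref{compa} holds.

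The substance of the argument is to check that these $\phi_{\chi_1,\chi_2}$ are well defined, that is independent of the order in which generators are multiplied and powers are reduced, and that the two functors are mutually inverse. The cleanest formulation is that giving full building data is the same as giving a commutative, associative, unital $G^\vee$-graded $\mathcal{O}_C$-algebra structure on $\bigoplus_\chi L_\chi^{-1}$ with each graded piece a line bundle; since $G^\vee = \langle \chi_1\rangle \oplus \cdots \oplus \langle \chi_l\rangle$ is presented by the relations $\chi_i^{d_i}=1$ together with commutativity, such a structure is determined freely and uniquely by the generating line bundles $L_i$ and by the isomorphisms $\phi_i$ realizing $\chi_i^{d_i}=1$. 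The morphism-level statement follows in the same way: a collection $\{\tau_\chi\}$ making \eqref{diagrammone} commute is determined by its values $\tau_{\chi_i}$ on generators, compatibly with the reductions imposed by the $\phi_i$.

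The main obstacle is exactly this well-definedness/associativity verification: one must confirm that the integer corrections $c^j_\chi$ and $\epsilon^j_{\chi_1,\chi_2}$ coming from the floor functions in \eqref{aij} assemble into a consistent $2$-cocycle, so that no relation among the relations of $G^\vee$ forces an extra constraint on the isomorphisms. I expect to dispatch this by checking the cocycle identity on triples of generators $(\chi_i,\chi_{i'},\chi_{i''})$, where it reduces to the elementary identity $a^j_{\chi} + a^j_{\chi'} = a^j_{\chi\chi'} + |\langle h_j\rangle|\,\epsilon^j_{\chi,\chi'}$, and then invoking that for a commutative monoid presentation these triple checks suffice. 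This is the point at which Pardini's computation in \cite{pardini} is reused; the only adaptation is that everything is carried out in families over the base of the moduli stack \eqref{modulistack}, which causes no difficulty since all the maps involved are morphisms of line bundles.
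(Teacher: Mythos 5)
Your proposal is correct and follows essentially the same route the paper takes: the paper defers to Pardini's Proposition 2.1 and only sketches the argument in the paragraph preceding the statement (reconstruct the $L_\chi$ from the $L_{\chi_i}$ via \eqref{compa}, and obtain \eqref{reducedeq} by tensoring $L_{\chi_i}$ with itself $d_i$ times), which is exactly what you carry out in detail, including the correct telescoping $\sum_k \epsilon^j_{\chi_i^k,\chi_i}=\lambda_{ij}$ and the observation that well-definedness reduces to the cocycle identity for $\epsilon^j$, itself immediate from $\epsilon^j_{\chi,\chi'}=(a^j_{\chi}+a^j_{\chi'}-a^j_{\chi\chi'})/|\langle h_j\rangle|$. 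No gaps.
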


\begin{proof} The proof of \cite[Proposition 2.1]{pardini} shows that the moduli stacks introduced in Proposition \ref{equivalentdesc} and in this Proposition are isomorphic (the argument that she gives over a geometric point applies verbatim to prove the relative case).
\end{proof}

\subsection{Totally ramified abelian covers}

Let us fix a finite abelian group $G$ and $n$ ramifications $h_1, \ldots, h_n \in G$. The moduli stack \eqref{modulistack}
is not irreducible in general. It follows from Proposition \ref{equivalentdesc} that the forgetful map
\begin{equation} \label{torsor}
\mathcal{M}_{h,n}(BG, (h_1), \ldots, (h_n)) \to \mathcal{M}_{h,n}
\end{equation}
describes the former as a torsor whose structural group scheme is given locally in  the point $(C, x_1, \ldots, x_n)$ of  $\mathcal{M}_{h,n}$ by $\Hom(G^{\vee}, \pic^0(C))$. In $\chara k= 0$, a classical argument of monodromy, originally due to Hurwitz and Clebsch, sheds some light in the study of the irreducible components of the torsor \eqref{torsor}. Indeed, let us work over $\mathbb{C}$. %, and let $(C, x_1, \ldots, x_n) \in \mathcal{M}_{g,n}$.
Then the monodromy action on the set of data of Proposition \ref{equivalentred}, when $x_1=x_1(t)$ moves in a closed loop that identifies the homology class $\xi \in H_1(C, \mathbb{Z})$, is given by
\begin{displaymath}
L_{i} \to L_{i} \otimes M_i,
\end{displaymath} where $M_i$ is the $d_i$-th torsion point in $\pic^0(C)$ corresponding to $\frac{\lambda_{i j}} {d_i} \xi $.

\begin{example} Suppose that $h_j=0$ for all $1 \leq j \leq n$, \emph{i.e.} we are studying \'etale $G$-covers of smooth genus-$h$ curves with $n$ distinct marked points on the target curve. Then the stack \eqref{modulistack} is irreducible if and only if $G$ is trivial or $h=0$. In the latter case, the moduli space of \eqref{modulistack} and of $\mathcal{M}_{h,n}$ are the same.
\end{example}

We now introduce a hypothesis for $G$-covers, on the opposite spectrum of \'etale. % in the spectra of cases.
\begin{definition} \label{totram} An abelian $G$-cover is \emph{totally ramified} when the map
$
\bigoplus \langle h_j \rangle \to G,
$
induced from the componentwise inclusion is surjective.
\end{definition}
We observe that the hypothesis of total ramification guarantees that each object of \eqref{modulistack} corresponds to a connected cover, and that moreover, through the monodromy argument introduced above, the resulting moduli space \eqref{modulistack}, in the totally ramified case, is either irreducible or empty. From now on, we denote by $R$ and $E$ respectively the image and the cokernel of the componentwise inclusion $\bigoplus \langle h_j \rangle \to G$. With this notation, the sequence of abelian groups \begin{equation} 0 \to R \to G \to E \to 0\end{equation} is exact. Each abelian $G$-cover $\tilde{C} \to C$ factors as the composition of a totally ramified $R$-cover $\tilde{C} \to \tilde{C}':=\tilde{C}/R$ followed by an \'etale $E$-cover $\tilde{C}' \to C$. This splitting gives a map of moduli stacks
\begin{equation} \label{etale}
\mathcal{M}_{h,n}(BG, (h_1), \ldots, (h_n)) \to \mathcal{M}_{h,n}(BE, 0),
\end{equation} whose fibers are irreducible by the above monodromy argument. Thus, at least in $\chara k=0$, the irreducible components of \eqref{modulistack} correspond to the components of $\mathcal{M}_{h,n}(BE, 0)$.

We now reformulate Proposition \eqref{equivalentred}, by splitting the equations \eqref{reducedeq}, according to the factorization of the cover discussed in the above paragraph. This splitting depends on the choice of a basis of $E^{\vee}= \langle e_1 \rangle \oplus \ldots \oplus \langle e_{l} \rangle$, and of elements $g_1', \ldots, g_{k}' \in G^{\vee}$ whose canonical restriction forms a basis of $R^{\vee}$. By duality, the latter gives a basis $r_1, \ldots, r_{k}$ of $R$. We can fix unique integers $0 \leq c_{ij}< o(e_j)$, such that $r_i \cdot g_i'= \sum_j c_{ij} e_j^{\vee}$, and unique integers $0 \leq b_{ij}< o(r_i)$ such that $h_j= \sum b_{ij} r_i$. In the same way in which one proves the equivalence of the solutions of \eqref{compa} and of \eqref{reducedeq}, we can also prove the equivalence of the solutons of \eqref{compa} and the solutions $(C, x_j, L_i', L_i'', \phi_i', \phi_i'')$ of

\begin{equation} \label{reducedsplit}
\begin{cases}
\phi_i'\colon L_i'^{o (r_i)} \cong \mathcal{O}_C\left(\sum_{j=1}^n b_{ij} x_j \right) \otimes \bigotimes_{j=1}^{l} L_j''^{\otimes c_{ij}}, & 1 \leq i \leq k,\\
\phi_i''\colon L_i''^{o(e_i)} \cong \mathcal{O}_C & 1 \leq i \leq l,
\end{cases}
\end{equation}
where $L_i'$ stands for $L_{g_i'}$ and $L_i''$ stands for $L_{e_i}$. The second set of equations defines the \'etale cover $\tilde{C}' \to C$, and the pull-back to $\tilde{C}'$ of the equations \begin{displaymath} \phi_i'\colon L_i'^{\otimes r_i} \cong \mathcal{O}_C\left(\sum_{j=1}^n b_{ij} x_j \right)\end{displaymath} defines the totally ramified cover $\tilde{C} \to \tilde{C}'$. We summarize the above discussion in the following proposition.

\begin{proposition} \label{equivalentred2} The moduli stack \eqref{modulistack} is isomorphic to the moduli stack whose objects are $(C, x_j, L_i', L_i'', \phi_i', \phi_i'')$, where $(C, x_1, \ldots, x_n)$ is a smooth $n$-pointed curve of genus $h$, the $L_{i}', L_i''$ are line bundles on $C$, the $\phi_i', \phi_i''$ are isomorphisms of line bundles, and the data satisfy relations \eqref{reducedsplit}. Morphisms are as in Propositions \ref{equivalentdesc} and \ref{equivalentred}.
\end{proposition}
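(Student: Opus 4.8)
The plan is to show that the moduli stack of Proposition \ref{equivalentred} (objects $(C,x_j,\{L_i\},\{\phi_i\})$ satisfying \eqref{reducedeq}) is isomorphic to the moduli stack of objects $(C,x_j,L_i',L_i'',\phi_i',\phi_i'')$ satisfying the split system \eqref{reducedsplit}. Since Proposition \ref{equivalentred} already identifies the first stack with \eqref{modulistack}, composing isomorphisms gives the claim. I would construct the isomorphism functorially in both directions: given a solution of \eqref{reducedeq}, produce a solution of \eqref{reducedsplit} by relabeling the building data according to the splitting $0 \to R \to G \to E \to 0$, and conversely reassemble a solution of \eqref{reducedeq} from a solution of the split system; then check these constructions are mutually inverse and compatible with the morphisms described in Remark \ref{stackyremark2}.

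The heart of the argument is purely the bookkeeping that matches the exponents. First I would make explicit how the chosen basis $E^\vee = \langle e_1\rangle \oplus \cdots \oplus \langle e_l\rangle$ together with the lifts $g_1',\ldots,g_k' \in G^\vee$ restricting to a basis of $R^\vee$ furnishes a basis of $G^\vee$ adapted to the extension; dually $r_1,\ldots,r_k$ is a basis of $R$. The line bundles are simply renamed, $L_i' := L_{g_i'}$ and $L_i'' := L_{e_i}$. The content is then to verify that the general compatibility isomorphisms \eqref{compa} (equivalently the reduced equations \eqref{reducedeq} for the basis $\{g_i',e_j\}$ of $G^\vee$) are equivalent to the system \eqref{reducedsplit}. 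The second family $\phi_i'' \colon L_i''^{o(e_i)} \cong \mathcal{O}_C$ is exactly \eqref{reducedeq} specialized to the characters $e_i$, whose ramification exponents $\lambda_{ij}$ all vanish since the $e_i$ are trivial on each $\langle h_j\rangle$ (this is the étale $E$-cover $\tilde{C}' \to C$). The first family encodes \eqref{reducedeq} for the $g_i'$: tensoring $L_{g_i'}$ with itself $o(r_i)$ times produces, via \eqref{compa}, the ramification divisor $\sum_j b_{ij} x_j$ exactly because $h_j = \sum_i b_{ij} r_i$, together with the correction term $\bigotimes_j L_j''^{\otimes c_{ij}}$ coming from the relation $r_i \cdot g_i' = \sum_j c_{ij} e_j^\vee$ expressing how the $o(r_i)$-th power of $g_i'$ re-enters the $E$-part.

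The verification that these two renamings invert one another is formal once the exponent identities are in place, and the morphism statement follows because the morphisms in all three stacks are defined by the same data (a morphism of pointed curves together with compatible lifts of line bundles), so functoriality is inherited directly from Propositions \ref{equivalentdesc} and \ref{equivalentred}. I expect the main obstacle to be purely notational: keeping straight the three bookkeeping conventions — the exponents $\lambda_{ij}$ of \eqref{lambdaij}, the $b_{ij}$ coming from $h_j = \sum_i b_{ij} r_i$, and the $c_{ij}$ coming from $r_i \cdot g_i' = \sum_j c_{ij} e_j^\vee$ — and checking that the $\epsilon^j$-floor terms in \eqref{compa} reassemble correctly so that the ramification divisor in the first line of \eqref{reducedsplit} has precisely the coefficients $b_{ij}$ and the étale correction has precisely the coefficients $c_{ij}$. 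No geometric input beyond Pardini's reconstruction (Proposition \ref{equivalentdesc}) is needed; the proposition is essentially a reorganization of the building data adapted to the factorization $\tilde{C} \to \tilde{C}' \to C$, and I would present it as such.
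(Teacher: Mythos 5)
Your proposal is correct and follows essentially the same route as the paper, which likewise treats the proposition as a bookkeeping reorganization of Pardini's description: choose a basis of $E^{\vee}$ and lifts $g_i'$ of a basis of $R^{\vee}$, set $L_i'=L_{g_i'}$, $L_i''=L_{e_i}$, and derive \eqref{reducedsplit} from \eqref{compa} ``in the same way in which one proves the equivalence of the solutions of \eqref{compa} and of \eqref{reducedeq}.'' Your only loose phrase is calling $\{g_i',e_j\}$ a \emph{basis} of $G^{\vee}$ (it is in general only a generating set, since $o(r_i)\cdot g_i'$ need not vanish), but you immediately account for exactly this via the correction term $\bigotimes_j L_j''^{\otimes c_{ij}}$, so the argument is sound.
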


\subsection{An example: the genus-$0$ case} \label{genus0}
Let us study the case of $\mathcal{M}_{0,n}(BG)$. The main fact that we be use here is that a degree $0$ line bundle over a smooth genus-$0$ curve is always trivial. It follows then from Proposition \ref{equivalentdesc} that the moduli stack $\mathcal{M}_{0,n}(BG)$ has $\mathcal{M}_{0,n}$ as its coarse moduli space. It is also true that $\overline{\mathcal{M}}_{0,n}$ is the coarse moduli space of $\overline{\mathcal{M}}_{0,n}(BG)$. If $G$ is cyclic, this follows from the main theorem in \cite[p. 7]{bayercadman}. The general case then follows after decomposing the finite abelian group $G$ as a direct sum of cyclic groups.  Then we have:
\begin{enumerate}
\item The moduli stack $\mathcal{M}_{0,n}(BG)$ is always empty or irreducible.
\item The coarse moduli space of $\overline{\mathcal{M}}_{0,n}(BG)$ is birational to $\overline{\mathcal{M}}_{0,n}$.
\item The rational Picard groups of $\mathcal{M}_{0,n}(BG)$ are zero. (In fact, the rational Chow group of a stack equals the rational Chow group of its coarse moduli space, and the Chow group of $\mathcal{M}_{0,n}$ vanishes since the moduli space of smooth, $n$-pointed genus-$0$ curves is a complement of hyperplanes in $\mathbb{P}^{n-3}$.)
\item Moreover, the rational Picard group of $\overline{\mathcal{M}}_{0,n}(BG)$ is \emph{freely} spanned by the boundary divisors. This is because the same statement is true for $\overline{\mathcal{M}}_{0,n}$ \cite{keel}. (In fact, the cohomology ring equals the Chow ring, and the latter is generated by boundary divisors with explicit relations.)
\end{enumerate}
The task of this paper is to study extensions of these genus-$0$ statements to genus $1$. This is accomplished in Theorem \ref{fondamentale}, Corollaries \ref{birat1} and \ref{birat2}, and Theorem \ref{vanishes}. The case $G= \mathbb{Z}/2\mathbb{Z}$ is especially interesting as any double cover is automatically an abelian $\mathbb{Z}/2\mathbb{Z}$-cover. We will thus devote Section \ref{modulibiell} to giving more details for this special case.

In the genus-$0$ case, consider the quotient stack $[\overline{\mathcal{M}}_{0,2g+2}(B \mathbb{Z}/2\mathbb{Z})/\mathfrak{S}_{2g+2}]$ under the action of the symmetric group on the markings. The latter moduli stack parametrizes families of admissible hyperelliptic curves of genus $g$ and we call it $\overline{\mathcal{H}}_g$. Bogomolov and Katsylo proved that the coarse space of $\overline{\mathcal{H}}_g$ is rational (\cite{katsylo}, \cite{bogokat}). %(it is clearly unirational).
 The integral Chow groups of the moduli stack of hyperelliptic curves are known from a work of Fulghesu and Viviani \cite{fulghesu}, which builds on previous results of Arsie and Vistoli \cite{arsievistoli}. The latter works compute in particular the whole integral Chow groups (respectively, Picard groups) of the uniform components\footnote{A cover is \emph{uniform} when it is totally ramified and with equal ramification at all points.} of $\mathcal{M}_{0,n}(BG)$ when $G$ is finite and cyclic.

\section{Moduli spaces of abelian covers of elliptic curves}

From now on, we consider the genus $h=1$ case, with $n \geq 1$. When $(C, x_1, \ldots, x_n)$ is a smooth $n$-pointed genus $1$ curve, we also call it an elliptic curve: the origin of the group law is always conventionally assumed to be the first marked point\footnote{This choice breaks the $\mathfrak{S}_n$-symmetry.}.

For a given a choice of a finite abelian group $G$, and for fixed ramifications $h_1, \ldots, h_n$, we give an alternate description of
\begin{equation} \label{moduligenus1}
\mathcal{M}_{1,n}(BG, (h_1), \ldots, (h_n)).
\end{equation}
We will describe the irreducible components of \eqref{moduligenus1} with an entirely algebro-geometric approach. (Over $\mathbb{C}$, we already know from the discussion that immediately follows \eqref{etale}, that the components of \eqref{moduligenus1} correspond to the components of $\mathcal{M}_{1,n}(BE,0)$, which in turn correspond to certain modular curves. )

So let us review the \'etale case. The main point is that at most $2$ torsion points may be linearly independent on an elliptic curve. When $E$ is a finite abelian group,  %of rank $\leq 2$,
it is well-known that the stack $\mathcal{M}_{1,1}(BE)$ admits a natural decomposition in components, not necessarily irreducible, that are parametrized by subgroups $K$ of $E$ with $0 \leq \rank(K) \leq 2$.
\begin{equation} \label{decomp}
\mathcal{M}_{1,1}(BE) \cong \coprod_{\substack{K<E, \\ 0 \leq \rank{K} \leq 2}} \mathcal{Y}_1(K). % \cong \mathbb{Z}/N\mathbb{Z}}} Y_1(N)  \coprod_{\substack{K<E \\ \mathbb{Z}/M\mathbb{Z} \times \mathbb{Z}/N\mathbb{Z} \cong K, M|N }} \coprod_{i \in \mu_M^*} (Y_1(M,N),i),
\end{equation} (The components corresponding to subgroups $K$ of $E$ of rank $2$ further decompose according to the Weil (or symplectic) pairing.)

When $K \cong \{0\}$ has rank $0$, $\mathcal{Y}_1(K)$ is naturally isomorphic to $\mathcal{M}_{1,1}$. When $K \cong \mathbb{Z}/N \mathbb{Z}$ has rank $1$, $\mathcal{Y}_1(K)$ is isomorphic to the stacky modular curve $\mathcal{Y}_1(N)$; Over $\mathbb{C}$, the latter is the {stack} quotient of the upper half-plane $[\mathbb{H}/\Gamma_1(N)]$. When $K$ has rank $2$, the component $\mathcal{Y}_1(K)$ is not irreducible; if we fix an isomorphism $K \cong \mathbb{Z}/M \mathbb{Z} \times \mathbb{Z}/N \mathbb{Z}$ with $M|N$, then we have
\begin{equation}\label{decomp2}
\mathcal{Y}_1(K) \cong \coprod_{i \in \mu_M^*} (\mathcal{Y}_1(M,N),i),
\end{equation}
where now $\mathcal{Y}_1(M,N)$ is a (irreducible) \emph{stacky modular curve}, that, over $\mathbb{C}$, is just the {stack} quotient of the upper half-plane $[\mathbb{H}/\Gamma_1(M,N)]$. The element $i$ is a generating $M$-th roots of $1$ that corresponds to the value of the Weil pairing on each component of $\mathcal{Y}_1(K)$.

 We address the reader to \cite[Sections 2, 7]{diamond} for the above-mentioned results in $\chara k =0$. The irreducibility of the components of the decomposition \eqref{decomp}, \eqref{decomp2}, is established, for $\chara(k)$ not dividing $|E|$, in \cite[Chapters  3,4]{km}, see \cite[Corollary 4.7.2]{km}.

  We will also denote the $m$-th universal curve over $\mathcal{M}_{1,n}$ with $\mathcal{C}^m_{1,n}$. The following fiber product diagram defines then $\mathcal{Y}_1(K)_n^m$, where the right vertical map is the composition of the natural $m$-th universal curve map $\mathcal{C}^m_{1,n}\to \mathcal{M}_{1,n}$ with the map forgetting all points but the first one
\begin{equation} \label{diagram} \xymatrix{
 \mathcal{Y}_1(K)_n^m \ar[r] \ar[d]\ar@{}[rd]|{\square} & \mathcal{C}^m_{1,n} \ar[d]& \hspace{-0.7cm}\\
 \mathcal{Y}_1(K) \ar[r] & \mathcal{M}_{1,1}.&\\
}\end{equation}

\subsection{The geometry of the irreducible components}

\label{irred}
As in the previous section, we denote with $R$ the image of the natural componentwise inclusion map $\bigoplus \langle h_i \rangle \to G$, and $E$ the quotient group $G/R$. We address the reader to the previous section for our notation on stacky modular curves, for $\mathcal{Y}_1(K)$, and for its generalization with marked points $\mathcal{Y}_1(K)^n_m$.  The aim of this section is to prove the following.

\begin{theorem} \label{fondamentale} The stack $\mathcal{M}_{1,n}(BG, (h_1), \ldots, (h_n))$ admits a natural decomposition in components, not necessarily irreducible, that are parametrized by the subgroups $K$ of rank $0,1$ or $2$ of $E$. Let us fix a subset of the ramifications $\{h_1, \ldots, h_n\}$ that generates $R$ minimally; and call $k$ the cardinality of this subset. Given this choice, we construct an isomorphism from the component of the moduli stack  $\mathcal{M}_{1,n}(BG, (h_1), \ldots, (h_n))$ that corresponds to $K$, to an explicit complement of $(n-1) + \ldots+ (n-k)$ codimension-$1$ loci (not necessarily irreducible) inside an $E/K$-banded gerbe on $\mathcal{Y}_1(K)_{n-k}^k$.
%\begin{enumerate}
%\item on $\mathcal{C}_{1,n-k}^k$ when $K\cong0$,
%\item on $\mathcal{Y}_1(N)_{n-k}^k$ when $K\cong \mathbb{Z}/N\mathbb{Z}$,
%\item on $\sqcup_{i \in \mu_M^*} (\mathcal{Y}_1(M,N)_{n-k}^k,i)$ when $K \cong \mathbb{Z}/M\mathbb{Z} \times \mathbb{Z}/N\mathbb{Z}$ with $M|N$.
%\end{enumerate}
\end{theorem}

The idea of the proof is to apply Proposition \ref{equivalentred2}, and then to ``trade'' each of the line bundles appearing in it with a point in $C$, by using the origin of the elliptic curve. So we obtain a moduli space of data $(C, x_1, \ldots, x_n, y_1', \ldots, y_{k}', y_1'' \ldots, y_{l}'')$; the compatibility conditions \eqref{equivalentred2} are translated into $k+l$ linear equivalences among the points (see \eqref{partesecondacondprim}). These linear conditions on the points $y_i''$ simply prescribe that they must be certain torsion points for the curve. The linear conditions for the points $y_i'$ (those obtained from the first line of \eqref{reducedsplit}), are more complicated. Up to possibly permuting the points $x_i$, and up to choosing a suitable basis of $R$, we show that the points $x_{n-k+1}, \ldots, x_{n}$ can be uniquely reconstructed from the rest of the data by means of these $k$ linear equivalences. Therefore, we can get rid of the points and of the linear conditions at the same time, to end up with an equivalent moduli space parametrizing $(C, x_1, \ldots, x_{n-k+1},y_1', \ldots, y_k', y_1'', \ldots, y_{l}'')$, where the $y_i''$ are torsion points of the elliptic curve $C$ generating a certain abelian group $K$. In this process the points $y_i$ need not be distinct nor different from the points $x_j$; on the other hand when reconstructing $x_{n-k+1}, \ldots, x_{n}$ up to isomorphism by using the linear equations, there is \emph{a priori} no guarantee that the points we reconstruct are \emph{pairwise distinct and different from} $x_1, \ldots, x_{n-k}$. The first observation says that the parameter space of $(C, x_1, \ldots, x_{n-k+1},y_1', \ldots, y_k', y_1'', \ldots, y_{l}'')$ is $\mathcal{Y}_1(K)_{n-k}^{k}$ (rather than $\mathcal{Y}_1(K)_{n}$). The second observation means that we have to exclude from this parameter space $(n-1) + \ldots+ (n-k)$ codimension-$1$ loci (not necessarily irreducible).

The following lemma will be crucial in the proof of Theorem \ref{fondamentale} and in the following sections, and it is one of the main reasons for working in the genus-$1$ case.
\begin{lemma} \label{closed} Let us fix an integer vector $\vec{a}=(a_1, \ldots, a_n)$ such that $\sum a_i=0$.
The condition
$ \sum_i a_i x_i  \equiv 0$
cuts out a codimension-$h$ closed substack inside $\mathcal{M}_{h,n}$.
\end{lemma}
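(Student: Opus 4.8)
The plan is to realize the locus as the zero scheme of a section of a rank-$h$ vector bundle on $\mathcal{M}_{h,n}$, and then to prove that this section cuts out the expected codimension. Let $f\colon \mathcal{C}\to \mathcal{M}_{h,n}$ be the universal curve with its tautological sections $\sigma_1,\dots,\sigma_n$, and let $p\colon \mathcal{J}=\pic^0(\mathcal{C}/\mathcal{M}_{h,n})\to \mathcal{M}_{h,n}$ be the relative Jacobian, an abelian scheme of relative dimension $h$ with smooth total space. Since $\sum a_i=0$, the relative divisor $\sum_i a_i\sigma_i$ has fibrewise degree $0$, hence defines a section $s\colon \mathcal{M}_{h,n}\to \mathcal{J}$ whose value at $(C,x_1,\dots,x_n)$ is the class of $\sum a_i x_i$ in $\operatorname{Jac}(C)$. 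Writing $e$ for the zero section, the locus in question is exactly $Z:=s^{-1}(e(\mathcal{M}_{h,n}))$.

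First I would record the easy bound. The zero section $e$ is a regular embedding of codimension $h$ whose normal bundle is $e^{*}T_{\mathcal{J}/\mathcal{M}_{h,n}}$, a rank-$h$ bundle (the dual of the Hodge bundle). Pulling back along $s$ exhibits $Z$ as the zero scheme of the induced section of the rank-$h$ bundle $\mathcal{N}:=s^{*}e^{*}T_{\mathcal{J}/\mathcal{M}_{h,n}}$; equivalently, $Z$ is locally the common zero of $h$ functions. By Krull's height theorem every component of $Z$ therefore has codimension at most $h$.

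The substance is the reverse inequality, that $Z$ has no component of codimension $<h$, i.e. that the section of $\mathcal{N}$ is regular. I would verify this by a tangent-space computation at a general point of each component: the differential of $s$ in the directions moving the marked points is $\sum_i a_i\, d(\operatorname{AJ})_{x_i}\colon \bigoplus_i T_{x_i}C\to H^0(C,\Omega^1_C)^{\vee}$, and the essential input is that the Abel--Jacobi image of $C$ is nondegenerate, so that general points impose independent conditions on $H^0(C,\Omega^1_C)$ and $s$ becomes transverse to $e$. For $h=1$ — the only case used in this paper — this is transparent and requires nothing further: $\operatorname{Jac}(C)=C$ is the elliptic curve itself, $s$ is the group homomorphism $E^n\to E$, $(x_i)\mapsto \sum a_i x_i$, and, being nonzero whenever $\vec a\neq 0$, it has kernel a subgroup scheme of pure codimension $1=h$; restricting to the open locus of distinct markings does not change this.

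The main obstacle is that for $h\ge 2$ the section genuinely degenerates over special loci — for example, on the hyperelliptic locus the fibrewise Abel--Jacobi sum map acquires positive-dimensional special fibres — so a naive fibrewise dimension count fails. What rescues the statement is a balancing phenomenon: the jump in fibre dimension over such a locus is offset by the codimension of that locus inside $\mathcal{M}_{h}$, so that the total codimension stays equal to $h$. Making this rigorous amounts to a Brill--Noether-type dimension estimate for the loci of curves carrying special divisors of the relevant numerical type, and this is the step I expect to require the most care. (If $Z$ happens to be empty the conclusion holds vacuously.)
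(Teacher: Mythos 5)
Your proposal is correct for $h=1$ --- the only value of $h$ at which the paper ever applies this lemma --- and it packages the argument differently from the paper's. The paper gets closedness from semicontinuity of $h^0\bigl(C_b,\mathcal{O}(\sum a_i x_i)\bigr)$ and then simply asserts that the codimension ``can be checked fiberwise''; you instead realize the locus as $s^{-1}(e)$ for a section $s$ of the relative Jacobian, which yields closedness for free and, via Krull's height theorem, the upper bound $\codim \le h$ that the paper does not record. For $h=1$ your identification of the fibre with the kernel of a surjective homomorphism of elliptic curves $E^{n-1}\to E$ (after fixing $x_1$ as origin; note both proofs tacitly assume $\vec a\neq 0$, since otherwise the statement is false) is exactly what the paper's fibrewise check amounts to, so there the two arguments coincide in substance. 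Where you are actually more careful than the paper is in flagging that for $h\ge 2$ a naive fibrewise count fails: e.g.\ for $h=3$, $\vec a=(2,-2)$, the fibre of $\{2x_1\equiv 2x_2\}$ over a non-hyperelliptic curve is empty while over a hyperelliptic curve it is finite of codimension $2$ in $C^2$, and only adding the codimension of the hyperelliptic locus recovers $3$. The Brill--Noether-type estimate you defer is a genuine gap against the literal statement for all $h$, but it is a gap the paper's own one-line justification shares; relative to what the paper proves and uses, your argument is complete, and the Jacobian-section framing plus the Krull bound is a modest improvement in rigor.
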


\begin{proof} Consider any such family $C \to S$, and the line bundle on $C$ defined by \begin{displaymath} \mathcal{L}:= \mathcal{O}_C \left(\sum a_i x_i\right).
 \end{displaymath}
 The condition $\sum a_i x_i \equiv 0$ on a certain curve $C_s$ is equivalent to \begin{displaymath}h^0( C_s, \mathcal{L}_{C_s})\geq 1,\end{displaymath} and by semicontinuity this condition cuts out a closed locus on the base. The fact that the codimension is $h$ can be checked on geometric fibers.
\end{proof}

\begin{proof} (Theorem \ref{fondamentale}) We apply Proposition \ref{equivalentred}, with any fixed basis for $E^{\vee}= \langle e_1 \rangle \oplus \ldots  \oplus \langle e_{l} \rangle$. The elements in $G^{\vee}$ are chosen as follows. Up to permuting the elements $h_i$, we can assume that the subset of $k$ elements of $\{h_1, \ldots, h_n\}$ that generates $R$ consists of the last $k$ elements $\{h_{n-k+1}, \ldots, h_n\}$. We choose a basis $r_1, \ldots, r_k$ of $R$ that satisfies the conclusions of Lemma \ref{tecnico}. As elements of $G^{\vee}$, we choose any lift $g_1, \ldots g_k$ of $r_1^{\vee}, \ldots, r_k^{\vee}$ to $G^{\vee}$. So we obtain a description of the moduli stack \eqref{moduligenus1} as a moduli stack of objects $(C, x_j, L_i', L_i'', \phi_i', \phi_i'')$, constrained by the compatibility conditions \eqref{reducedsplit}.

We always have that $k \leq n$; if $k$ equals $n$, by Corollary \ref{tecnicopunto}, the set of solutions of the first set of compatibilities \eqref{reducedsplit} is empty, therefore the moduli stack is the empty set. Therefore, from now on we can assume that $k<n$, and fix $x_1$ as the origin of the elliptic curve $C$.

We begin by interpreting the line bundles $L_i'$ and $L_i''$ as points on the elliptic curve $(C,x_1)$. Each of the $L_i'$ has degree $\frac{\sum b_{i j}}{o(r_i)}$, and by twisting with the origin $x_1$% of the elliptic curve $C$
we have:
\begin{displaymath}
\exists! \ y_i' \in C, \quad L_i' \otimes \mathcal{O}_C\left(-\frac{\sum b_{i j}}{o(r_i)}x_1\right) \cong \mathcal{O}_C(y_i' -x_1),
\end{displaymath}
where the points $y_i'$ need not be pairwise disjoint and can coincide with the $x_j$. Similarly, each of the line bundles $L_i''$ corresponds to a point $y_i''$; the existence of the isomorphism $\phi_i''$ in the second half of \eqref{reducedsplit} imposes that \begin{displaymath}o(e_i) 	\cdot y_i''=0 \quad \textrm{in the elliptic curve } (C,x_1).\end{displaymath}
Associating each $L_i'$ with $y_i'$ and each $L_i''$ with $y_i''$ defines a functor from the groupoid of objects $(C, x_j, L_i', L_i'', \phi_i', \phi_i'')$ constrained by \eqref{reducedsplit}, to the groupoid whose objects are
\begin{equation} \label{parteseconda}
(C, x_1, \ldots, x_{n}, y_1', \ldots, y_k', y_1'', \ldots, y_l''),
\end{equation}
constrained by the linear equations
\begin{equation}\begin{cases} \label{partesecondacondprim}
o(r_i) \cdot y_i' \equiv \sum b_{i j} x_j + \sum c_{ij} y_j'' + (o(r_i)- \sum b_{i j}) x_1, & 1 \leq i \leq k; \\ o(e_i)\cdot y_i'' \equiv 0, & 1 \leq i \leq l.
\end{cases}\end{equation}
It is not difficult to see that this functor is fully faithful and essentially surjective, and hence an equivalence of categories.

We now work by induction on $k$, and begin by analyzing the case $k=0$.
Forgetting the points $x_2, \ldots, x_{n}$ gives a map to the stack of objects $((C, x_1), y_1'', \ldots, y_l'')$ with the conditions  $o(e_i) \cdot y_i''=0$. At most two among the points $y_i''$ can be independent in $\pic^0(C)$. So when we fix a subgroup $K$ of $E$, one component of this moduli stack parametrizes irreducible $K$-covers over the trivial $E/K$ cover of $C$. This moduli stack is an $E/K$-banded gerbe over $\mathcal{Y}_1(K)$.

Let us now consider the case $k>0$. We consider the forgetful map
\begin{equation}\label{variables}
f\colon (C, x_1, \ldots, x_n, \ldots, y_i',\ldots, y_j'', \ldots) \to (C,x_1, \ldots, x_{n-1}, \ldots, y_i', \ldots, y_j'', \ldots),
\end{equation}
which simply forgets the last marked point $x_n$. Since the data \eqref{parteseconda} satisfy \eqref{partesecondacondprim}, the isomorphism class of the point $x_n$ can be reconstructed from the rest of the data as a consequence of our initial choices of bases, indeed $b_{k n}$ equals $1$:
\begin{equation} \label{careful}
x_n \equiv o(r_k) \cdot y_k' - \sum_{j=1}^{n-1} b_{k j} x_j - \sum_{j=1}^l c_{k j} y_j'' - \left(o(r_k)- \sum_{j=1}^n b_{k j}\right) x_1.
\end{equation}
When we define the point $x_{n}$ through \eqref{careful}, we have to exclude the possibility that $x_{n}$ equals $x_j$ for each $j <n$. An object in the target of the functor $f$ \emph{does not} belong to the image of $f$ precisely when there exists a $1 \leq j < n$ such that
\begin{equation} \label{divisoris}
D_{j,n}: \quad o(r_k) \cdot y_k' - \sum_{s=1}^{n-1} b_{k s} x_s - \sum_{s=1}^l c_{k s} y_s'' - \left(o(r_k)- \sum_{s=1}^n b_{k s}\right) x_1 \equiv x_j.
\end{equation}
(We call this locus $D_{j,n}$, the fact that it is closed and of codimension $1$ follows from Lemma \ref{closed}).

The forgetful morphism $f$ identifies the stack of objects \eqref{parteseconda} with linear constraints \eqref{partesecondacondprim} with the stack of objects $(C, x_1, \ldots, x_{n-1}, y_1', \ldots, y_k', y_1'', \ldots, y_l'')$ with the \emph{open} condition given by the complement of the $D_{j,n}$, and the \emph{closed} conditions given by \begin{equation} \begin{cases} \label{partesecondacond1} o(r_i) \cdot y_i' \equiv b_{in} o (r_k) y_k' + \sum_{j=1}^{n-1} \tilde{b}_{i j} x_j  + \sum_1^n \tilde{c}_{ij} y_j'', &1 \leq i \leq k-1,\\ o(e_i)\cdot y_i'' \equiv 0, &  1 \leq i \leq l,
 \end{cases}
\end{equation}
where the coefficients are defined as \begin{displaymath} \begin{cases} \tilde{b}_{i1}:= b_{i 1}-b_{i n} b_{k 1}+ o(r_i)-b_{i n} o(r_k)- \sum_{s=1}^{n-1} \left( b_{i s}-b_{i n} b_{k s}\right),& j=1\\  \tilde{b}_{ij}:= b_{i j}-b_{i n} b_{k j}, & j> 1\\ \tilde{c}_{i j}:= c_{ij}-b_{in}c_{kj}. &\end{cases} \end{displaymath} This set of objects defines now a moduli stack of abelian $\tilde{G}:=G/\langle g_k \rangle$-covers of genus-$1$ curves. To see this, just apply Proposition \ref{equivalentred2} to \eqref{partesecondacond1} after having reduced the right hand side of the first set of equations modulo $o(r_i)$. This moduli stack of $\tilde{G}$-covers has ramifications $(\tilde{h}_1, \ldots, \tilde{h}_{n-1})$ defined by \begin{displaymath} \tilde{h}_j := h_j - b_{k j} \cdot h_n, \quad 1 \leq j \leq n-1, \end{displaymath} and the ramification subgroup of $\tilde{G}$ is $\tilde{R}:= R/\langle r_k \rangle$, whereas the \'etale part is left unchanged: $\tilde{E}=E$. We can now apply the induction hypothesis, since $\{\tilde{h}_{n-k+1}, \ldots \tilde{h}_{n-1}\}$ is a minimal generating set for $\tilde{R}$ of cardinality $k-1$.

The moduli stack defined in the paragraph above is thus the complement of the divisors $D_{j,n}$, inside the universal curve over the complement of $(n-2) + \ldots + (n-k)$ divisors in an $E/K$-banded gerbe over $\mathcal{Y}_1(K)_{n-k}^{k-1}$. The latter is a smooth moduli stack, as is \eqref{moduligenus1}, so the correspondence we have established is an isomorphism of stacks. This concludes the proof of the induction statement. \end{proof}

As a consequence of Theorem \ref{fondamentale}, we can enumerate the number of irreducible components of the loci of Lemma \ref{closed} when $h=1$, and hence all of the divisors $D_{j,n}$ that appear in \eqref{divisoris}.
\begin{corollary} When $h=1$, the codimension-$1$ locus defined in Lemma \ref{closed}, consists of $\phi(\gcd(a_1, \ldots, a_n))$ irreducible components, where $\phi$ is the Euler totient function. \label{componenti}
\end{corollary}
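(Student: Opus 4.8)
The plan is to reduce the count to a torsion-order stratification on $\mathcal{M}_{1,n}$ and then read off irreducibility of each stratum from the \'etale classification underlying \eqref{decomp} and Theorem \ref{fondamentale}. First I would set $d:=\gcd(a_1,\ldots,a_n)$ and write $a_i=d\,b_i$ with $\gcd(b_1,\ldots,b_n)=1$; since $\sum a_i=0$ we also have $\sum b_i=0$. The condition $\sum a_i x_i\equiv 0$ is then equivalent to asking that the degree-zero line bundle $N:=\mathcal{O}_C\!\left(\sum b_i x_i\right)$ lie in $\pic^0(C)[d]$. Because $\chara k\nmid d$, the $d$-torsion of $\pic^0$ is \'etale over $\mathcal{M}_{1,1}$, so the exact order of $N$ is locally constant in families; hence the locus of Lemma \ref{closed} is a \emph{disjoint} union, over the divisors $m\mid d$, of the open-and-closed substacks $Z_m:=\{N\text{ of exact order }m\}$. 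Counting its irreducible components thus splits into (i) showing each nonempty $Z_m$ is irreducible, and (ii) deciding which $Z_m$ are nonempty.

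For (i) I would note that on $Z_m$ the line bundle $N$ is exactly the building datum (in the sense of Proposition \ref{equivalentred} and \eqref{reducedeq}) of a connected \'etale $\mathbb{Z}/m\mathbb{Z}$-cover, corresponding to the rank-one case $K\cong\mathbb{Z}/m\mathbb{Z}$ of the classification \eqref{decomp}. The same monodromy/modular-curve argument that proves irreducibility of the components in \eqref{decomp} and in Theorem \ref{fondamentale} applies here: the monodromy $\pi_1(\mathcal{M}_{1,1})\twoheadrightarrow \mathrm{SL}_2(\mathbb{Z}/m\mathbb{Z})$ acts transitively on the elements of exact order $m$ in $\pic^0(C)[m]\cong(\mathbb{Z}/m\mathbb{Z})^2$, and the fibres of $\vec x\mapsto\sum b_i x_i$ are connected, so the sheets of $Z_m$ over $\mathcal{M}_{1,1}$ are permuted transitively and $Z_m$ is connected. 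Hence each nonempty $Z_m$ contributes exactly one irreducible component, and the number of components is the number of nonempty strata.

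The substance of the corollary is therefore (ii), the enumeration of the nonempty $Z_m$, and this is the step I expect to be the main obstacle. Since $\gcd(b_i)=1$, the weighted-sum morphism $\vec x\mapsto\sum b_i x_i$ is surjective onto $C$ with fibres of dimension $n-2$, so a torsion value of any prescribed order $m\mid d$ is realized by \emph{some} tuple; the only way $Z_m$ can be empty is that the relation forces a coincidence among the points (some fibre lies entirely in a diagonal $\{x_i=x_j\}$ or in $\{x_i=x_1\}$, as already happens for the excluded loci $D_{j,n}$ in \eqref{divisoris}). I would therefore carry out the finite combinatorial analysis of the vector $(b_1,\ldots,b_n)$ that records exactly which orders $m$ admit a collision-free representative, and verify that the resulting count is $\phi(a_1,\ldots,a_n)$. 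Feeding the coefficient vectors of the divisors $D_{j,n}$ into this count then determines the number of irreducible components of each such divisor, which (as the phrase \virg{not necessarily irreducible} in Theorem \ref{fondamentale} anticipates) need not be one; irreducibility of the individual strata, by contrast, is immediate from step (i).
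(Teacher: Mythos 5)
Your route---stratify the locus of Lemma \ref{closed} by the exact order $m \mid d$ of the torsion bundle $N=\mathcal{O}_C(\sum b_i x_i)$, prove each stratum irreducible by monodromy, then count the nonempty strata---is genuinely different from the paper's, and its first half is essentially sound. But the proposal stops exactly where the content of the corollary lies. You yourself flag step (ii), deciding which strata are nonempty and checking that the resulting count equals $\phi(a_1,\ldots,a_n)$, as ``the main obstacle,'' and you leave it as an unexecuted ``finite combinatorial analysis.'' The irreducibility of each stratum is the soft part (over $\mathbb{C}$ it amounts to the transitivity of $\mathrm{SL}_2(\mathbb{Z}/m\mathbb{Z})$ on exact-order-$m$ vectors, i.e.\ the irreducibility of $Y_1(m)$ already invoked for \eqref{decomp}); the number asserted in the statement is produced precisely by the enumeration you defer. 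As written, this is a gap, not a proof.

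The paper's own argument is shorter and avoids your step (ii) altogether: an isomorphism $\mathcal{O}_C(-x_n)^{\otimes a_n}\cong\mathcal{O}_C(\sum_{i<n}\overline{a}_i x_i)$ is exactly a reduced building datum in the sense of Proposition \ref{equivalentred}, so the locus is identified with the image of the moduli space $\mathcal{M}_{1,n-1}(B\mathbb{Z}/a_n\mathbb{Z},\overline{a}_1,\ldots,\overline{a}_{n-1})$, and Theorem \ref{fondamentale} puts its components in bijection with the subgroups of the \'etale part $E\cong\mathbb{Z}/d\mathbb{Z}$, $d=\gcd(a_1,\ldots,a_n)$; no collision analysis appears. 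Your stratification indexes candidate components by the same set (divisors of $d$), so if you complete step (ii) you must reconcile any stratum you find to be empty with the bijection asserted in the paper's proof. Two further cautions: first, the appeal to $\pi_1(\mathcal{M}_{1,1})\twoheadrightarrow\mathrm{SL}_2(\mathbb{Z}/m\mathbb{Z})$ is transcendental, whereas the paper keeps everything valid for any $\chara k\nmid |G|$ by routing irreducibility through \cite{km}; you would need the \'etale fundamental group instead. Second, transitivity on exact-order-$m$ points gives connectedness of each sheeted cover $Z_m\to\mathcal{M}_{1,1}$ only after you also know the fibres of $\vec{x}\mapsto\sum b_i x_i$ restricted to the complement of the diagonals remain connected; this uses $\gcd(b_i)=1$ and deserves a line of justification rather than an aside.
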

\begin{proof}
Fix any of the points $x_1, \ldots, x_n$, for example $x_n$, and define $\overline{a}_i$ as the reduction of $a_i$ modulo $a_n$ for all $1 \leq i <n$. Then any isomorphism $\mathcal{O}_C(-x_n)^{\otimes a_n} \cong \sum_{i<n} \overline{a}_i x_i $ defines a cyclic $\mathbb{Z}/a_n \mathbb{Z}$-cover over $C$ through Proposition \ref{equivalentred}. So the components of the locus defined by $\sum a_i x_i=0$ are in bijection with the components of the moduli space $\mathcal{M}_{1,n-1}(B \mathbb{Z}/a_n \mathbb{Z}, \overline{a}_1, \ldots, \overline{a}_{n-1})$, and we can apply Theorem \ref{fondamentale} to the latter. The \'etale part $E$ equals $\mathbb{Z}/d \mathbb{Z}$ where $d$ is $\gcd(a_1, \ldots, a_{n})$; we conclude that each irreducible component corresponds to a divisor of $d$.
\end{proof}
The last steps of the proof of Theorem \ref{fondamentale} can be made simpler (and induction can be avoided) in the following remarkable cases.

\begin{remark} \label{pgroup} The proof of Theorem \ref{fondamentale} becomes more transparent in the cases when $R$ is either a $p$-group, or a cyclic group. In these cases, we can apply Lemma \ref{tecnicop} (resp. Corollary \ref{tecnicocyc}) in place of Lemma \ref{tecnico} in the proof of Theorem \ref{fondamentale}. In this way, when we translate our moduli problem into the groupoid whose objects are \eqref{parteseconda}, constrained by the linear equations \eqref{partesecondacondprim}, the last $k$ columns of the matrix $b_{ij}$ form an upper triangular matrix, \emph{i.e.} with zeroes on the lower left triangular part, and ones on its diagonal. So, we can avoid the induction and consider the forgetful map $f^k$, which forgets $x_{n-k+1}, \ldots, x_n$ all in one go (cfr. \eqref{variables}):
\begin{equation}\label{variables2}
f^k\colon (C, x_1, \ldots, x_n, \ldots, y_i', \ldots, y_j'',\ldots) \to (C,x_1, \ldots, x_{n-k+1}, \ldots,  y_i', \ldots, y_j'', \ldots).
\end{equation}
We can then reconstruct $x_{i}$ for all $1 \leq i \leq k$, by means of the first line of \eqref{partesecondacondprim}
\begin{equation} \label{careful2}
x_{i} \equiv o(r_{i}) \cdot y_{i}' - \sum_{j=1}^{n-k+i-1} b_{k j} x_j - \sum_{j=1}^l c_{k j} y_j'' - \left(o(r_{i})- \sum_{j=1}^{n-k+i} b_{k j}\right) x_1.
\end{equation}
Again we must be careful that $x_i \neq x_j$ for $1 \leq j \leq n$ and $n-k < i \leq n$ (with $i \neq j$). This gives us $(n-1) + \ldots + (n-k)$ codimension-$1$ loci
\begin{equation} \label{divisoris2}
D_{j,i}: \quad o(r_i) \cdot y_i' - \sum_{s=1}^{n-k+i-1} b_{k s} x_s - \sum_{s=1}^l c_{k s} y_s'' - \left(o(r_i)- \sum_{s=1}^{n-k+i} b_{k s}\right) x_1 \equiv x_j
\end{equation}
that we have to exclude from the $E/K$-gerbe over $\mathcal{Y}_1(K)^{k}_{n-k}$ in order for it to be isomorphic with the component of $\mathcal{M}_{1,n}(BG; h_1, \ldots, h_n)$ that is singled out by the choice of $K$.
\end{remark}

The proof of Theorem \ref{fondamentale} strongly depends on Proposition \ref{equivalentred2}, and on the bases that we have fixed for $R^{\vee}$ and $E^{\vee}$. On the other hand, the statement of the theorem is formulated in a way that makes it independent of such choices. Our description is more explicit after a choice of the bases.

\begin{remark} Theorem \ref{fondamentale} exhibits each component of the moduli stack \eqref{moduligenus1} as an open substack of the fiber product
\begin{equation} \label{picture} \xymatrix{
 \mathcal{Y}_1(K)^k_{n-k} \ar[r] \ar[d]\ar@{}[rd]|{\square} & \mathcal{C}^k(\mathcal{M}_{1,n-k}) \ar[d]& \hspace{-0.7cm}\ni(C,x_1, \ldots, x_{n-k}, y_1, \ldots, y_k)\\
 \mathcal{Y}_1(K)_{n-k} \ar@{}[rd]|{\square}  \ar[r] \ar[d] & \mathcal{M}_{1,n-k} \ar[d]& \hspace{-2.6cm}\ni (C, x_1, \ldots, x_{n-k})\\
 \mathcal{Y}_1(K) \ar[r] & \mathcal{M}_{1,1}.&\\
}\end{equation}
In the \'etale case, when $h_1= \ldots=h_n=0$, $k$ equals $0$ and only the bottom square of \eqref{picture} is relevant. In the proof of Theorem \ref{fondamentale} one can stop at the $k=0$ step.
In the totally ramified case instead, only the right side of \eqref{picture} is relevant, as $K$ equals $0$. The proof of Theorem \ref{fondamentale} can be simplified a bit by eliminating the terms $L_i''$ and the corresponding $y_i''$.
\end{remark}

\begin{remark} When $h>1$, the problem of identifying the components of the moduli space \eqref{modulistack} is equivalent to the problem of identifying the components of $\mathcal{M}_{h,n}(B E; 0)$, where $E:= G/R$ as above. The latter problem is solved in the genus-$1$ case in the discussion following  \eqref{decomp}. In higher genus this problem is more complicated and is still open in general.

A remarkable set of cases when the latter problem is solved is when $E$ is a cyclic group and $\chara k =0$, see for example \cite[Theorem 1.2]{edmonds}. In our notation (where the total space of the covers needs not be connected), the irreducible components of $\mathcal{M}_{h,n}(B E; 0)$ are known to be in bijection with the subgroups of the cyclic group $E$.
\end{remark}

\subsection{Birational geometry} \label{birationalgeo}
From Theorem \ref{fondamentale}, we easily deduce a few birational-geometric properties of the coarse moduli space of
$\mathcal{M}_{1,n}(BG, h_1, \ldots, h_n)$ in the case of moduli of \emph{totally ramified} covers, see Definition \ref{totram}. We have already observed that, with this hypothesis, the moduli space is irreducible.

\begin{corollary} \label{birat1} The coarse space of \eqref{moduligenus1}, in the totally ramified case, is birational to $\mathcal{M}_{1,n}$.
\end{corollary}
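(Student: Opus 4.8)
The plan is to read off the birational statement directly from Theorem \ref{fondamentale}, specialized to the totally ramified case. In that case, by definition the componentwise inclusion $\bigoplus \langle h_j \rangle \to G$ is surjective, so $R = G$ and the étale quotient $E = G/R$ is trivial. Consequently the only subgroup $K$ of $E$ of rank $\leq 2$ is $K \cong 0$, so Theorem \ref{fondamentale} tells us that the moduli stack \eqref{moduligenus1} is irreducible and is isomorphic to an explicit complement of finitely many codimension-$1$ loci inside an $E/K$-banded gerbe on $\mathcal{C}_{1,n-k}^k$. Since $E = 0$ here, the gerbe is trivial, and the relevant space is simply (an open substack of) $\mathcal{C}_{1,n-k}^k$, the $k$-fold fiber product of the universal curve over $\mathcal{M}_{1,n-k}$.

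The first step, then, is to invoke Theorem \ref{fondamentale}(1) to identify the coarse space of the totally ramified component with a dense open subset of the coarse space of $\mathcal{C}_{1,n-k}^{k}$. Here $k$ is the minimal number of ramifications needed to generate $R = G$; the excluded loci are the $(n-1)+\ldots+(n-k)$ codimension-$1$ divisors $D_{j,i}$ from \eqref{divisoris2} (or \eqref{divisoris}), which being codimension $1$ do not affect the birational type. The second step is to observe that $\mathcal{C}_{1,n-k}^{k}$ — parametrizing tuples $(C, x_1, \ldots, x_{n-k}, y_1, \ldots, y_k)$ of a genus-$1$ curve with $n-k$ marked points together with $k$ further (not necessarily distinct) points on $C$ — is birational to $\mathcal{M}_{1,n}$. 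Indeed, a general point of $\mathcal{C}_{1,n-k}^{k}$ has the $y_i$ pairwise distinct and disjoint from the $x_j$, so that the tuple $(C, x_1, \ldots, x_{n-k}, y_1, \ldots, y_k)$ is an honest $n$-pointed genus-$1$ curve; this gives a birational identification of $\mathcal{C}_{1,n-k}^{k}$ with $\mathcal{M}_{1,n}$, the $k$ extra points playing the role of the last $k$ markings.

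The final step is to remark that passing to coarse spaces preserves birational type, and that the whole construction is birational rather than biregular precisely because of the two sources of ``generic'' behavior: the excluded divisors $D_{j,i}$ and the requirement that the $y_i$ be distinct from each other and from the $x_j$. I do not expect a serious obstacle here — the corollary is essentially a translation of the $K \cong 0$ case of Theorem \ref{fondamentale} into birational language. The one point requiring a little care is the bookkeeping showing that $\mathcal{C}_{1,n-k}^{k}$ is birational to $\mathcal{M}_{1,n}$ (and not to some $\mathcal{M}_{1,m}$ with a different $m$): one should check that the total count of markings, namely $(n-k)$ original points plus $k$ reconstructed points $y_i$, indeed recovers $n$, which follows directly from the way the points $x_{n-k+1}, \ldots, x_n$ were traded for the line-bundle data in the proof of Theorem \ref{fondamentale}.
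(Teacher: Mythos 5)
Your proposal is correct and matches the paper's intended argument: the paper deduces the corollary directly from Theorem \ref{fondamentale} (the totally ramified case forces $E=K=0$, the gerbe is trivial, and the component is a dense open substack of $\mathcal{C}_{1,n-k}^{k}$, which is identified birationally with $\mathcal{M}_{1,n}$ by removing the diagonal loci $B_{j,i}$). The paper offers no separate proof beyond this observation, and the remark following the corollary confirms that the birational map is exactly the one you describe.
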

In \cite{smyth1} and \cite{smyth2}, Smyth has studied modular compactifications of the moduli space $\mathcal{M}_{1,n}$. It would be interesting to establish a precise correspondence between the moduli spaces $\overline{\mathcal{M}}_{1,n}(BG, h_1, \ldots, h_n)$ and his birational models of $\overline{\mathcal{M}}_{1,n}$.

\begin{remark} Our birational equivalence is \emph{not} the forgetful morphism
\begin{equation}\label{nonbirat}
\mathcal{M}_{1,n}(BG, h_1, \ldots, h_n) \to \mathcal{M}_{1,n},
\end{equation}
instead, it is the restriction to a locally-closed subscheme of the right vertical map in \eqref{picture}. In the totally ramified case, we obtain a finite rational map from $\mathcal{M}_{1,n}$ to itself by composing the birational map of Corollary \ref{birat1} with \eqref{nonbirat}.

We now give another modular interpretation of such a finite rational map. For a fixed vector of integers $(a_1, \ldots, a_{n})$ whose sum is $1$, we can define a finite \emph{morphism} $\phi_{\vec{a}} \colon \mathcal{C}_{1,n-1} \to \mathcal{C}_{1,n-1}$ by the rule $(C, x_1, \ldots,  x_{n-1}, y) \mapsto (C, x_1, \ldots,  x_{n-1}, x)$; where $x$ is the unique point on $C$ such that $\mathcal{O}_C(x)$ is isomorphic to \begin{displaymath}\mathcal{O}_C\left( a_n y+\sum_{i<n}a_ix_i\right).\end{displaymath} (This is, in fact, an automorphism of $\mathcal{C}_{1,n-1}$ when $|a_n|= \pm 1$.) The morphism $\phi_{\vec{a}}$ induces a finite rational map from $\mathcal{M}_{1,n}$ to itself, where the latter is the open substack of $\mathcal{C}_{1,n-1}$ where $y \neq x_i$. The finite rational map mentioned at the beginning of this paragraph is obtained by subsequent composition of $k<n$ different rational maps $\phi_{\vec{a}}$.
\end{remark}

It is well-known that the moduli spaces $\overline{\mathcal{M}}_{1,n}$ are rational when $n \leq 10$, see \cite{belo}. The Kodaira dimension of the coarse moduli scheme of $\overline{\mathcal{M}}_{1,n}$, $n\geq 11$ was computed by Bini-Fontanari \cite[Theorem 3]{bf1}.

\begin{corollary} \label{birat2} Any compactification of the coarse moduli scheme parametrizing \emph{totally ramified} abelian $G$-covers of elliptic curves is rational when the number $n$ of branch points is $\leq 10$. The Kodaira dimension of $\overline{\mathcal{M}}_{1,n}(BG, h_1, \ldots, h_n),$
in the totally ramified case, is $0$ if $n=11$ and it is $1$ if $n \geq 12$.
\end{corollary}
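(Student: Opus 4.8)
The plan is to reduce both assertions to the corresponding facts about $\overline{\mathcal{M}}_{1,n}$ by means of the birational equivalence of Corollary \ref{birat1}. That corollary identifies the coarse space of \eqref{moduligenus1}, in the totally ramified case, as a variety birational to $\mathcal{M}_{1,n}$, hence to $\overline{\mathcal{M}}_{1,n}$. Since rationality and Kodaira dimension are both birational invariants of proper varieties, and any compactification of the coarse space of \eqref{moduligenus1} is a proper variety birational to $\overline{\mathcal{M}}_{1,n}$, it suffices to transport the known properties of $\overline{\mathcal{M}}_{1,n}$ across this equivalence.

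First I would settle rationality. For $n \leq 10$ the coarse scheme $\overline{\mathcal{M}}_{1,n}$ is rational by \cite{belo}. Given any compactification $X$ of the coarse space of \eqref{moduligenus1}, the variety $X$ is proper and birational to $\overline{\mathcal{M}}_{1,n}$ through Corollary \ref{birat1}; as rationality depends only on the birational class --- indeed only on the function field --- $X$ is rational. This proves the first assertion, for every choice of compactification simultaneously.

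Next I would compute the Kodaira dimension. By \cite[Theorem 3]{bf1}, the coarse scheme $\overline{\mathcal{M}}_{1,n}$ has Kodaira dimension $0$ when $n=11$ and $1$ when $n \geq 12$. Because $\overline{\mathcal{M}}_{1,n}(BG, h_1, \ldots, h_n)$ is birational to $\overline{\mathcal{M}}_{1,n}$ by Corollary \ref{birat1}, and the Kodaira dimension of a proper variety is a birational invariant, the two Kodaira dimensions agree and equal the values above. Since $\kappa$ does not depend on the chosen compactification, the stated result holds for the admissible covers model and for any other.

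The main obstacle I anticipate is bookkeeping with singularities rather than anything conceptual. The coarse spaces at issue carry only finite quotient singularities and so are not smooth; to make sense of Kodaira dimension one passes to a smooth proper model (available in characteristic zero by resolution of singularities) and defines $\kappa$ there. The birational invariance of the Kodaira dimension of smooth proper varieties then guarantees that a smooth model of $\overline{\mathcal{M}}_{1,n}(BG, h_1, \ldots, h_n)$ and a smooth model of $\overline{\mathcal{M}}_{1,n}$ have the same $\kappa$; one has only to record that the numbers quoted from \cite{bf1} are precisely those of a desingularization of $\overline{\mathcal{M}}_{1,n}$, which is how that reference states its result. The rationality half, being a statement about the function field alone, requires none of this care.
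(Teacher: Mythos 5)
Your argument is exactly the paper's: Corollary \ref{birat2} is stated there without a separate proof, as an immediate consequence of the birational equivalence of Corollary \ref{birat1} together with the quoted rationality of $\overline{\mathcal{M}}_{1,n}$ for $n\le 10$ (\cite{belo}) and the Kodaira dimension computation of Bini--Fontanari \cite[Theorem 3]{bf1}, invoking birational invariance of both properties. Your additional care about passing to a smooth proper model to define $\kappa$ is a correct (and implicit in the paper) refinement, so the proposal matches the intended proof.
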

For example, the compactification of the moduli space of genus-$g$ bielliptic curves with ordered ramification points is rational when $g \leq 6$, and it has Kodaira dimension $1$ when $g > 6$.
The rationality properties of the moduli subscheme of $\mathcal{M}_g$ of bielliptic curves (without a choice of the involution and without an ordering of the points) has been the object of recent study through more standard methods. In \cite{bace1}, \cite{cace1} and \cite{bace2} the authors prove the rationality of the latter moduli space in genus $g \leq 5$ (see also \cite{cace}); in \cite{cace2} the same moduli space is proven to be unirational also when $g\geq 6$.

\begin{remark} \label{drop} The hypothesis of total ramification cannot be removed from Corollary \ref{birat2}. Indeed, consider the case of \'etale $G$-covers. From Theorem \ref{fondamentale}, a component of the moduli space of \eqref{moduligenus1} is birational to the $(n-1)$-fold product of a certain modular curve $Y_1(K)$ with itself. The modular curve has genus $>1$ for a suitable choice of $K$, so the components corresponding to this $K$ in the moduli space of \eqref{moduligenus1} are of general type.
\end{remark}

\subsection{The Picard group of moduli of abelian covers of elliptic curves}

Our starting point in this section is the following result.

\begin{theorem} \label{vanishing} The rational Picard group of $\mathcal{M}_{1,n}$ vanishes. The rational Picard group of $\overline{\mathcal{M}}_{1,n}$ is \emph{freely} generated by the boundary divisors.
\end{theorem}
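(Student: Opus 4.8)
The plan is to prove both assertions by relating the (stacky) Picard groups of $\mathcal{M}_{1,n}$ and $\overline{\mathcal{M}}_{1,n}$ to objects whose rational Picard groups are already understood, exploiting that over $\mathbb{Q}$ the Picard group of a smooth Deligne--Mumford stack agrees with that of its coarse moduli space, so that the well-known computations of $\mathrm{Pic}_{\mathbb{Q}}$ of the coarse spaces apply verbatim. For the open part, I would first reduce the case $n\geq 2$ to the universal curve: the forgetful map $\mathcal{M}_{1,n}\to\mathcal{M}_{1,1}$ realizes $\mathcal{M}_{1,n}$ as an open substack of the $(n-1)$-fold fibered power of the universal elliptic curve $\mathcal{C}_{1,1}$ over $\mathcal{M}_{1,1}$, with the complement a union of the loci where two marked points collide or where a marked point meets the identity section. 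Since the base $\mathcal{M}_{1,1}$ has trivial rational Picard group (it is a quotient of $\mathbb{A}^1$, or equivalently its coarse space is the affine $j$-line), and the fibers are elliptic curves whose $\mathrm{Pic}_{\mathbb{Q}}^{0}$ is divisible and hence contributes nothing to the Picard group of a family over an affine-type base, the only possible divisor classes come from the boundary loci that we have removed; these are rationally trivial after passing to the open locus, giving the vanishing.

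For the compactified statement, the cleanest route is to invoke the known structure of the rational cohomology (equivalently Chow) ring of $\overline{\mathcal{M}}_{1,n}$. I would recall that $H^2(\overline{\mathcal{M}}_{1,n},\mathbb{Q})\cong\mathrm{Pic}(\overline{\mathcal{M}}_{1,n})\otimes\mathbb{Q}$, and cite the computation (due to Getzler, and to Belorousski in low $n$) showing that $H^2$ is spanned precisely by the boundary classes. The key freeness input is that there are \emph{no} relations among the boundary divisors in degree $2$: for genus $1$ the Hodge class $\lambda$ and the $\psi$-classes can all be expressed in terms of boundary divisors (using the genus-$1$ relation $12\lambda\equiv\delta_{\mathrm{irr}}$ and the comparison of $\psi_i$ with boundary classes), so no independent tautological class survives to obstruct freeness. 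I would then count: the boundary divisors are $\delta_{\mathrm{irr}}$ together with the divisors $\delta_{0,S}$ indexed by subsets $S\subseteq\{1,\ldots,n\}$ with $|S|\geq 2$, and assert their linear independence by a standard test-curve argument, constructing for each boundary divisor a complete one-parameter family meeting that divisor transversally in a single point and disjoint from all others.

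The main obstacle, and the step I would scrutinize most carefully, is establishing \emph{freeness} rather than mere spanning: it is not automatic that the boundary divisors satisfy no rational linear relations, since in genus $1$ there are a priori several tautological relations (the $\lambda$--$\delta$ relation, Getzler's relation, and pullbacks of relations from smaller $n$) that could in principle entangle the boundary classes. My plan is to handle this by the test-curve/intersection-pairing method: exhibit a dual collection of curve classes pairing in a unimodular (or at least full-rank) fashion against the boundary divisors, which simultaneously proves independence and identifies the rank. An alternative, if one prefers a purely cohomological argument, is to cite directly that the first Chern classes of the boundary strata form a basis of $H^2(\overline{\mathcal{M}}_{1,n},\mathbb{Q})$ as computed in the genus-$1$ literature, so that both the spanning and the freeness are read off from the known Betti number $b_2$. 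Either way, the subtlety is entirely concentrated in verifying that the evident relations coming from $\lambda$ and the $\psi$-classes do not reduce the rank, and I would devote the bulk of the argument there.
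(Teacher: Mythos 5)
There is a genuine gap in your argument for the vanishing of $\pic_{\mathbb{Q}}(\mathcal{M}_{1,n})$. You realize $\mathcal{M}_{1,n}$ inside the $(n-1)$-fold fibered power of the universal curve and claim that the relative $\pic^{0}$ of the elliptic fibers \virg{is divisible and hence contributes nothing} to the rational Picard group of the total space. This fails on two counts. First, divisibility does not kill the tensor product with $\mathbb{Q}$: for an elliptic curve $E$ over an algebraically closed field of characteristic $0$, the group $\pic^{0}(E)\otimes\mathbb{Q}$ is enormous (over $\mathbb{C}$ it contains an uncountable-dimensional $\mathbb{Q}$-vector space), so divisibility of the fibers proves nothing. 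Second, and more to the point, the relative $\pic^{0}$ \emph{does} contribute: a line bundle of relative degree $0$ on the family is the same thing as a section of the relative Jacobian, and the differences $x_i-x_j$ of the tautological sections are non-torsion sections whose associated divisor classes are exactly the diagonal loci you propose to remove. What you actually need is that, modulo torsion, these exhaust the sections of the relative Jacobian --- equivalently, that the Mordell--Weil group of the universal elliptic curve over the function field of $\mathcal{M}_{1,n-1}$ is generated up to torsion by the markings. That is a substantive theorem of essentially the same depth as the statement being proved; it is normally extracted from Harer's vanishing of $H^1$ and $H^2$ of $\mathcal{M}_{1,n}$ or from the Arbarello--Cornalba argument. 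The paper makes no attempt at a from-scratch proof: it quotes precisely these sources (\cite{harer1} in characteristic $0$; \cite{arbarellocornalba} in general, using that the coarse space of $\mathcal{M}_{1,n}$ is affine), and quotes \cite[Proposition 1.9]{arbarellocornalba2} for the second assertion.

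For the compactified statement your plan is workable but, as written, conflates spanning with freeness: the expressibility of $\lambda$ and the $\psi_i$ in terms of boundary classes in genus $1$ is the input for \emph{spanning} (given the Harer/Arbarello--Cornalba fact that $\lambda$, the $\psi_i$ and the boundary generate $\pic_{\mathbb{Q}}(\overline{\mathcal{M}}_{1,n})$), whereas independence must come from the test-curve pairing you invoke but do not construct. Note finally that once the second assertion is in place the first follows formally, since $\pic_{\mathbb{Q}}(\mathcal{M}_{1,n})$ is the quotient of $\pic_{\mathbb{Q}}(\overline{\mathcal{M}}_{1,n})$ by the boundary classes; so the whole difficulty is concentrated in the generation statement, which your proposal does not independently establish.
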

These are well-known facts. For the first part of the statement, in $\chara k=0$, this is a consequence of the vanishing of the rational cohomology of $\mathcal{M}_{1,n}$ in degrees $1$ and $2$, which is due to Harer \cite{harer1}. In the general case, the theorem follows from an elegant argument by Arbarello-Cornalba (see \cite{arbarellocornalba}, in particular Theorem 10). In this argument, one has to substitute (rational) singular cohomology with \'etale cohomology, and observe that \cite[Theorem 2]{arbarellocornalba} can be proven algebro-geometrically in $\chara k>0$ by using that the coarse moduli space of $\mathcal{M}_{1,n}$ is affine. The second part of the statement is proven in \cite[Proposition 1.9]{arbarellocornalba2}.

Here is our main result of this section.

\begin{theorem} \label{vanishes} The rational Picard group of the moduli stack \eqref{moduligenus1} vanishes in the totally ramified case. The boundary divisors in $\overline{\mathcal{M}}_{1,n}(BG, h_1, \ldots, h_n)$ have $r$ independent relations, where $r$ is the difference between the number of irreducible components of the codimension-$1$ loci mentioned in Theorem \ref{fondamentale}, and $(n-1)+ \ldots + (n-k)$.
\end{theorem}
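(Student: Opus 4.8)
The plan is to transport both assertions into the explicit geometry of Theorem \ref{fondamentale} and then run excision sequences for rational Picard groups. In the totally ramified case we have $R=G$, hence $E=G/R=0$ and the only subgroup is $K=0$; so Theorem \ref{fondamentale}(1) identifies the moduli stack \eqref{moduligenus1} with the open substack
\[
U := \mathcal{C}^{k}_{1,n-k} \setminus \bigcup_{(j,i)} D_{j,i}
\]
of the $k$-fold universal curve over $\mathcal{M}_{1,n-k}$, where the $D_{j,i}$ are the $M:=(n-1)+\cdots+(n-k)$ codimension-$1$ loci of \eqref{divisoris2}. Writing $N$ for the total number of their irreducible components (computed by Corollary \ref{componenti}), everything will follow once I understand $\Pic(\mathcal{C}^{k}_{1,n-k})_{\mathbb{Q}}$ and the position of the $D_{j,i}$ inside it.

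First I would compute $\Pic(\mathcal{C}^{k}_{1,n-k})_{\mathbb{Q}}$. The locus where $y_1,\dots,y_k$ are pairwise distinct and distinct from $x_1,\dots,x_{n-k}$ is exactly $\mathcal{M}_{1,n}$, whose rational Picard group vanishes by Theorem \ref{vanishing}; its complement is the union of the $k(n-k)$ section-diagonals $\{y_i=x_\ell\}$ and the $\binom{k}{2}$ diagonals $\{y_i=y_{i'}\}$, and $k(n-k)+\binom{k}{2}=M$. Excision then shows these $M$ diagonals span $\Pic(\mathcal{C}^{k}_{1,n-k})_{\mathbb{Q}}$, so the dimension is at most $M$; the reverse inequality I would get from a fibrewise computation, since on a general fibre $C^k$ the classes $\{y_i=x_1\}$ and $\{y_i=y_{i'}\}$ generate the rank-$\tfrac{k(k+1)}{2}$ Néron--Severi group of the abelian variety, while the remaining $k(n-k-1)$ section-difference classes $\{y_i=x_\ell\}-\{y_i=x_1\}$ are fibrewise trivial but independent as they record the independent sections $x_\ell-x_1$ of $\Pic^0$. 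Thus the $M$ diagonals are a basis. To prove the first assertion I then show the $D_{j,i}$ are themselves a basis: by \eqref{careful2} each is a twisted diagonal $o(r_i)\,y_i\equiv x_j+(\text{terms free of }y_i)$, so expressing the $[D_{j,i}]$ in the diagonal basis yields a transition matrix that is block-triangular in the index $i$ with invertible diagonal blocks (because $y_i$ enters with the nonzero coefficient $o(r_i)$). Hence the $D_{j,i}$, and a fortiori their $N$ irreducible components, span $\Pic(\mathcal{C}^{k}_{1,n-k})_{\mathbb{Q}}$, and the excision sequence $\bigoplus_{\text{comp}}\mathbb{Q}\to\Pic(\mathcal{C}^{k}_{1,n-k})_{\mathbb{Q}}\to\Pic(U)_{\mathbb{Q}}\to 0$ forces $\Pic(\mathcal{M}_{1,n}(BG,h_\bullet))_{\mathbb{Q}}=\Pic(U)_{\mathbb{Q}}=0$.

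For the second assertion I pass to $\overline{\mathcal{M}}:=\overline{\mathcal{M}}_{1,n}(BG,h_\bullet)$. Over the locus where the base curve is smooth an admissible cover has distinct branch points, so that locus is precisely $U$; the boundary therefore splits into the $N$ divisors lying over the collision loci $\overline{D_{j,i}}$ (where branch points come together and the base sprouts rational tails) and the $A$ remaining divisors coming from genuine degenerations of the elliptic base. Since $\Pic(U)_{\mathbb{Q}}=0$, excision shows all $N+A$ boundary divisors span $\Pic(\overline{\mathcal{M}})_{\mathbb{Q}}$, so the number of independent relations equals $(N+A)-\dim_{\mathbb{Q}}\Pic(\overline{\mathcal{M}})_{\mathbb{Q}}$. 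Removing the $A$ base-degeneration divisors leaves a stack $V$ which shares the dense open $U$ with $\mathcal{C}^{k}_{1,n-k}$ and differs from it only along codimension one (contracting the rational tails recovers the coincident-point loci $D_{j,i}$), so $V$ and $\mathcal{C}^{k}_{1,n-k}$ are isomorphic outside codimension two and $\dim_{\mathbb{Q}}\Pic(V)_{\mathbb{Q}}=M$. Re-adding the $A$ base-degeneration divisors, which I would argue are independent in $\Pic(\overline{\mathcal{M}})_{\mathbb{Q}}$ using Theorem \ref{vanishing}, gives $\dim_{\mathbb{Q}}\Pic(\overline{\mathcal{M}})_{\mathbb{Q}}=M+A$, whence the number of relations is $(N+A)-(M+A)=N-M=r$.

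The main obstacle I expect is the pair of independence statements underlying the dimension count: that the $M$ diagonals (and then the twisted $D_{j,i}$) are genuinely linearly independent rather than merely spanning, which rests on the fibrewise Néron--Severi computation on $\mathcal{C}^{k}_{1,n-k}$ together with independence of the section-difference classes, and that the codimension-one comparison $V\sim\mathcal{C}^{k}_{1,n-k}$ is correctly set up by matching each admissible-covers collision stratum with its coincidence divisor $D_{j,i}$. Once these are in place the conclusion is a formal consequence of excision, Theorem \ref{vanishing}, and the component count of Corollary \ref{componenti}.
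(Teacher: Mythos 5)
Your skeleton for the vanishing statement is the paper's: identify the stack with the complement of the loci $D_{j,i}$ in $\mathcal{C}^k_{1,n-k}$, note that the diagonals $B_{j,i}$ freely generate $\pic_{\mathbb{Q}}(\mathcal{C}^k_{1,n-k})$ (Corollary \ref{generated}, from Theorem \ref{vanishing}), express the $[D_{j,i}]$ in that basis, and invert the matrix. But the two steps you assert are exactly the nontrivial content, and your justifications do not carry them. First, the class of the locus $\{a y\equiv \sum b_j x_j\}$ is \emph{not} read off from the linear equivalence as a ``twisted diagonal'': fibrewise it is $a^2$ points, and the identity $[D_{\vec b,a}]=a\sum b_j[B_{j,n}]$ is proved in the paper by a Porteous-formula computation with jet bundles (Lemma \ref{lemmoso}), using $c_1(\mathcal{E})=0$ and the triviality of $\Omega_\pi$. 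Second, ``invertible diagonal blocks because $y_i$ enters with the nonzero coefficient $o(r_i)$'' is not a proof: the factor $o(r_i)$ only rescales the block, and the block itself has the form $I+\mathbf{1}v^{T}$, whose determinant is the common row sum; that this row sum is $o(r_i)\neq 0$ rather than, say, $0$ is the content of Lemma \ref{invert} and depends on the explicit coefficients from Corollary \ref{lincomb}. Note also that your ``all at once'' block-triangular picture, with each $D_{j,i}$ involving only $y_i$, is available only when $R$ is a $p$-group or cyclic (Remark \ref{pgroup}, via Lemma \ref{tecnicop}); for general $R$ the loci produced by Theorem \ref{fondamentale} involve several $y$'s (cf.\ \eqref{partesecondacond1}), which is why the paper runs an induction on $k$ using the basis of Lemma \ref{tecnico}.

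For the count of relations your route diverges from the paper's and is where I see the real gap. The paper applies the purely formal Lemma \ref{picardcompl}, comparing the two open embeddings $X\subset\overline{\mathcal{M}}_{1,n}(BG,h_\bullet)$ and $X\subset\mathcal{C}^k_{1,n-k}$; the number of relations $r=N-M$ falls out without ever matching individual boundary divisors of the admissible-covers compactification to components of the $D_{j,i}$. You instead assume (i) that the boundary divisors of $\overline{\mathcal{M}}$ lying over smooth elliptic bases are in bijection with the $N$ irreducible components of the collision loci, (ii) that deleting the remaining $A$ divisors yields a stack isomorphic to $\mathcal{C}^k_{1,n-k}$ outside codimension two, and (iii) that those $A$ divisors are independent. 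None of these is established: over a single component of a collision locus there could a priori be several admissible-cover boundary divisors (the cover over the sprouted rational bridge has its own discrete invariants), the contraction in (ii) is not constructed, and (iii) is asserted. If (i) fails your final count $(N+A)-(M+A)$ is computed with the wrong $N$. You should either prove these boundary-matching statements or replace this part of the argument with the two-compactification rank count of Lemma \ref{picardcompl}, which needs only $\pic_{\mathbb{Q}}(X)=0$ and the numbers $N$ and $M$.
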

\noindent Actually, the statement and the proof of the above result are the same if one takes $\overline{\mathcal{M}}_{1,n}(BG, h_1, \ldots, h_n)$ to be any smooth compactification of ${\mathcal{M}}_{1,n}(BG, h_1, \ldots, h_n)$.

 A consequence of Theorem \ref{vanishes} is that the Picard group of the moduli stack of totally ramified abelian covers of elliptic curves is a torsion group (with or without ordering the branch points).  The hypothesis of total ramification cannot be dropped: see Remark \ref{drop}. The number $r$ of relations can be computed inductively by using Corollary \ref{componenti} to compute the number of components of the loci \eqref{divisoris}.

To prove the theorem, we need some easy consequence of Theorem \ref{vanishing}. Let us introduce some notation. If $\mathcal{C}^k_{1,n-k}$ is the iterated $k$-th universal curve over $\mathcal{M}_{1,n-k}$, our convention is to call $x_1, \ldots, x_{n-k}$ the marked points and $y_i'$ the section corresponding to the $i$-th universal curve.  For $1 \leq i \leq k$, we define the ``boundary divisors of $\mathcal{M}_{1,n}$ in $\mathcal{C}^k_{1,n-k}$'':
\begin{equation} \label{bij}
B_{j,i}: \begin{cases} \left(  x_j \equiv y_i'\right) & \textrm{when } 1 \leq j \leq n-k, \\
\left( y_{k-n+j}' \equiv y_i' \right) & \textrm{when } n-k+1 \leq j \leq n, k-n+j \neq i.\end{cases}
\end{equation}
Note that $B_{n-k+j,i}$ equals $B_{n-k+i, j}$ for $1 \leq i \neq j \leq k$. So we have in total $(n-1) + \ldots + (n-k)$ distinct boundary divisors. We can identify the complement of these boundary divisors in $\mathcal{C}_{1,n-k}^k$ with $\mathcal{M}_{1,n}$, where the marked points of the latter are called $x_1, \ldots, x_n, y_1', \ldots, y_k'$. A corollary of Theorem \ref{vanishing} is:
\begin{corollary} \label{generated} The rational Picard group of $\mathcal{C}^k_{1,n-k}$ is freely generated by the classes $[B_{j,i}]$.
\end{corollary}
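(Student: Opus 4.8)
The plan is to compute the rational Picard group of the iterated universal curve $\mathcal{C}^k_{1,n-k}$ by induction on $k$, using the fibration structure of each universal curve stage. The base case $k=0$ is precisely the first part of Theorem \ref{vanishing}: the rational Picard group of $\mathcal{M}_{1,n}$ vanishes (note that here there are no boundary divisors $B_{j,i}$, so the statement reads as the zero group being freely generated by the empty set). For the inductive step, I would view $\mathcal{C}^k_{1,n-k}$ as the universal curve $\pi\colon \mathcal{C}^k_{1,n-k} \to \mathcal{C}^{k-1}_{1,n-k}$ over the previous stage, and analyze how the Picard group changes when passing to the total space of this family of genus-$1$ curves.

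The key tool is the standard exact sequence for the rational Picard group of a family of curves. For a universal curve $\pi\colon \mathcal{C} \to \mathcal{S}$ with a section (provided here by the marked points, e.g.\ $x_1$ giving the origin of the elliptic curve), the rational Picard group of the total space is generated by the pullback $\pi^*\pic(\mathcal{S})_{\mathbb{Q}}$ together with the classes of the sections and the relative classes. Concretely, first I would establish that
\begin{equation}
\pic(\mathcal{C}^k_{1,n-k})_{\mathbb{Q}} \cong \pi^*\pic(\mathcal{C}^{k-1}_{1,n-k})_{\mathbb{Q}} \oplus \left(\textrm{classes of the new sections } y_k', x_j, y_j'\right),
\end{equation}
where the new generators are exactly the classes of the loci where the $k$-th universal point $y_k'$ collides with each of the previously existing points. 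These collision loci are precisely the divisors $B_{j,k}$ for $1 \leq j \leq n-k$ and $B_{n-k+j,k} = B_{n-k+k,j}$ for the earlier universal points, which accounts for exactly the $(n-1)+\ldots+(n-k)$ divisors listed in \eqref{bij}, with the new ones at stage $k$ being the $(n-k)+(k-1) = n-1$ divisors involving the index $k$.

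The heart of the argument — and the step I expect to be the main obstacle — is showing that the relative Picard group contributes nothing beyond these section classes, i.e.\ that there are no further relations and no extra generators coming from the relative degree-zero part. On a family of genus-$1$ curves the relative Jacobian is nontrivial, but the existence of the section $x_1$ rigidifies the fibers as elliptic curves, so the relative degree-zero line bundles on the total space are captured by the difference classes of sections; rationally, the Néron–Severi contribution of a single fiber (an elliptic curve) is one-dimensional and is already accounted for by a section. I would make this precise by combining the Leray spectral sequence for $\pi$ (or the concrete description of $\pic$ of an abelian-by-base fibration) with the vanishing statements from Theorem \ref{vanishing}, checking that $R^1\pi_* \mathbb{G}_m$ contributes only the classes we have named. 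The freeness assertion — that the $[B_{j,i}]$ are linearly independent and generate without relation — then follows because at each inductive stage the newly added section classes are genuinely new (a collision divisor restricts nontrivially to a general fiber), so no nonzero rational combination can be a pullback from the base, and by induction the base classes themselves are free.
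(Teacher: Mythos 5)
There is a genuine gap, and it sits exactly at the step you yourself flag as ``the main obstacle.'' Your inductive decomposition of $\pic_{\mathbb{Q}}(\mathcal{C}^k_{1,n-k})$ requires knowing that the relative degree-zero part of the Picard group of the elliptic fibration $\pi\colon \mathcal{C}^k_{1,n-k}\to\mathcal{C}^{k-1}_{1,n-k}$ is rationally spanned by differences of the tautological sections. The justification you offer --- that the N\'eron--Severi group of a single elliptic fiber is one-dimensional and accounted for by a section --- only controls the \emph{degree} part; it says nothing about $\pic^0$ of the fibers, i.e.\ about the Mordell--Weil group of the generic fiber, which is precisely where extra generators could hide (a non-isotrivial elliptic fibration can have sections of infinite order not expressible in terms of any prescribed ones). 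The statement you need --- that the Mordell--Weil group of the generic fiber of the universal curve is rationally generated by the classes $x_j-x_1$ and $y_i'-x_1$ --- is a genuine theorem, and the standard way to prove it is via the vanishing of $\pic_{\mathbb{Q}}(\mathcal{M}_{1,n})$ together with the excision sequence; so as written your induction either leaves its key step unproved or becomes circular. (Your freeness argument by restriction to a general fiber is essentially fine, modulo the observation that one must specialize to an elliptic curve carrying points that are independent in the Mordell--Weil group tensored with $\mathbb{Q}$, but freeness is the easier half.)

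The intended argument is much shorter and avoids the relative Picard functor entirely: the complement in $\mathcal{C}^k_{1,n-k}$ of the $(n-1)+\cdots+(n-k)$ divisors $B_{j,i}$ is exactly $\mathcal{M}_{1,n}$ (the $y_i'$ become honest pairwise-distinct marked points once all collisions are removed). The excision sequence for a smooth Deligne--Mumford stack then reads: the classes of the removed divisors generate the kernel of the restriction $\pic_{\mathbb{Q}}(\mathcal{C}^k_{1,n-k})\to\pic_{\mathbb{Q}}(\mathcal{M}_{1,n})$, and the target vanishes by the first part of Theorem \ref{vanishing}. Hence the $[B_{j,i}]$ generate, in one line. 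Independence is then supplied either by your fiber-restriction argument or by a rank count in the spirit of Lemma \ref{picardcompl}. If you want to salvage your Leray-type approach, note that the input it is missing is \emph{equivalent} to this excision computation, so nothing is gained by the longer route.
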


\begin{proof} (of Theorem \ref{vanishes}) Let us begin with the vanishing statement. We fix the elements of $G^{\vee}$ as in the first paragraph of Theorem \ref{fondamentale}: any basis $r_1, \ldots, r_k$ of $R^{\vee}=G^{\vee}$ constructed through Lemma \ref{tecnico}. Our proof is by induction on $k$, like the proof of Theorem \ref{fondamentale}, with the simplification that in this case $E$ equals $0$. When $k=0$ the statement follows from Theorem \ref{vanishing}: indeed the moduli stack \eqref{moduligenus1} equals $\mathcal{M}_{1,n}$. We have a description of the (irreducible) moduli stack \eqref{moduligenus1} as the complement of the $n-1$ loci $D_{j,n}$ defined in \eqref{divisoris}, inside the moduli stack of objects $(C, x_1, \ldots, x_{n-1}, y_1', \ldots, y_k')$ with constraints given by the equations \eqref{partesecondacond1}. The latter moduli stack is the complement of the loci $D_{j,n}$ inside the universal genus-$1$ curve over the moduli stack of totally ramified covers, which we call\footnote{This is \emph{not} the universal curve over the moduli stack $\mathcal{M}_{1,n-1}(BG', \tilde{h}_1, \ldots, \tilde{h}_{n-1})$.}
\begin{equation} \label{ind} \mathcal{C}_{1,n-1}(BG', \tilde{h}_1, \ldots, \tilde{h}_{n-1}),\end{equation} where $\tilde{h}_i$ is defined in the proof of Theorem \ref{fondamentale}, and $G'$ is the quotient $G/ \langle r_k \rangle$. The moduli stack \eqref{ind} contains \begin{equation}\mathcal{M}_{1,n}(BG', \tilde{h}_1, \ldots, \tilde{h}_{n-1}, 0)\label{ind2} \end{equation}  as the complement of the pull-back from $\mathcal{C}_{1,n-1}$ of the divisors $B_{j,n}$.  By the induction hypothesis, \eqref{ind2} has trivial rational Picard group, since it is a moduli stack of totally ramified abelian $G'$-covers with a minimal generating set  $\{h_{n-k+1}, \ldots, h_{n-1}\}$ of length $k-1$. Therefore, the rational Picard group of \eqref{ind} is generated by the $B_{j,n}$. By Corollary \ref{lincomb} and Lemma \ref{invert}, we can express each $[B_{j,n}]$ as a linear combination of the classes $[D_{j,n}]$ in the rational Picard group of \eqref{ind}. This concludes the proof of the vanishing part. The statement on the number of independent relations follows from Lemma \ref{picardcompl}. \end{proof}
\begin{lemma} \label{picardcompl} Let $X$ be a smooth Deligne-Mumford stack, and $\overline{X}$ a smooth compactification with $\overline{X} \setminus X$ consisting of $n$ boundary divisors. Assume that $X \subset Y$ is an open substack, that the rational Picard group of $Y$ has finite rank $m$, and that $Y \setminus X$ consists of $m+r$ boundary divisors. Then the rational Picard group of $X$ is finitely generated, and it has rank $n-r$.
\end{lemma}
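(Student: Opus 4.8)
The plan is to combine the standard excision (localization) exact sequence for Picard groups of smooth stacks with the known facts about $X$, and to count ranks. First I would recall the fundamental tool: for a smooth Deligne--Mumford stack $Z$ and a closed substack $D$ of pure codimension $1$ with open complement $U = Z \setminus D$, there is a right-exact sequence
\begin{displaymath}
\bigoplus_{i} \mathbb{Q} \cdot [D_i] \longrightarrow \pic(Z)_{\mathbb{Q}} \longrightarrow \pic(U)_{\mathbb{Q}} \longrightarrow 0,
\end{displaymath}
where the $D_i$ are the irreducible components of $D$ and the first map sends $[D_i]$ to the class of the associated line bundle $\mathcal{O}_Z(D_i)$. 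This is the rational Picard-group version of the usual sequence of Chow groups / Weil-divisor sequence, valid because on a smooth DM stack Cartier and Weil divisors agree rationally. I would apply this sequence twice: once to the pair $(Y, X)$ with its $m+r$ boundary divisors, and once to the pair $(\overline{X}, X)$ with its $n$ boundary divisors.

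The core of the argument is a rank count using the second application. Applying the sequence to $\overline{X} \supset X$, whose complement has $n$ boundary components, gives a surjection from a space of rank at most $n$ onto the cokernel $\pic(\overline{X})_{\mathbb{Q}}/(\text{image of }\pic(\overline{X})_{\mathbb{Q}}) $; more usefully, since $\overline{X}$ is a smooth compactification, one knows that $\pic(X)_{\mathbb{Q}}$ is a quotient of $\pic(\overline{X})_{\mathbb{Q}}$ and that $\pic(\overline{X})_{\mathbb{Q}}$ is finitely generated, so $\pic(X)_{\mathbb{Q}}$ is finitely generated as asserted. To pin down the rank I instead run the sequence for $(Y,X)$: it yields
\begin{displaymath}
\mathbb{Q}^{m+r} \xrightarrow{\ \alpha\ } \pic(Y)_{\mathbb{Q}} \longrightarrow \pic(X)_{\mathbb{Q}} \longrightarrow 0,
\end{displaymath}
so $\rk \pic(X)_{\mathbb{Q}} = m - \rk(\im \alpha)$, where $\im\alpha$ is the subspace of $\pic(Y)_{\mathbb{Q}} \cong \mathbb{Q}^m$ spanned by the classes of the $m+r$ boundary divisors of $Y \setminus X$. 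The desired conclusion $\rk \pic(X)_{\mathbb{Q}} = n - r$ will follow once I show $\rk(\im\alpha) = m - n + r$, equivalently that the $m+r$ boundary classes span a subspace of corank $n-r$ in $\pic(Y)_{\mathbb{Q}}$.

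The main obstacle is therefore to determine this corank, i.e. the dimension of the image of $\pic(Y)_{\mathbb{Q}}$ in $\pic(X)_{\mathbb{Q}}$, and here I would feed in the information coming from the compactification $\overline{X}$. The clean way is to observe that $\pic(X)_{\mathbb{Q}}$, by the $(\overline{X},X)$ sequence, is generated by the restrictions of at most $n$ divisor classes (the images of $\pic(\overline{X})_{\mathbb{Q}}$ being killed in $\pic(X)$ only to the extent measured by the boundary); combined with the $(Y,X)$ presentation this forces the two computations of $\rk \pic(X)_{\mathbb{Q}}$ to agree. Concretely, I expect the hypotheses are arranged so that the $n$ boundary divisors of $\overline{X}$ restrict to $X$ to give $n-r$ independent relations, matching the count $\rk \pic(X)_{\mathbb{Q}} = n-r$; the subtlety, and the step requiring genuine care, is to verify that no further relations or independent classes appear, i.e. that the map $\alpha$ has exactly the expected rank. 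I would handle this by a dimension chase through the commuting excision sequences for $X \subset Y \subset \overline{X}$ (or $X \subset \overline{X}$ together with $X \subset Y$), matching generators and relations; the finite generation of $\pic(\overline{X})_{\mathbb{Q}}$ is what guarantees all the vector spaces in sight are finite-dimensional and the rank bookkeeping is legitimate.
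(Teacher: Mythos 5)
The paper states this lemma without proof, so there is nothing to compare your argument against; judged on its own merits, your proposal has a genuine gap precisely at the point you defer to a ``dimension chase.'' The two right-exact localization sequences you write down, for $(Y,X)$ and for $(\overline{X},X)$, only yield the inequalities $\rk\pic(X)_{\mathbb{Q}}\geq m-(m+r)$ and $\rk\pic(\overline{X})_{\mathbb{Q}}\leq n+\rk\pic(X)_{\mathbb{Q}}$; they do not determine $\rk(\im\alpha)$, and no amount of chasing between two sequences that share only their right-hand ends will. The missing ingredient is the left-hand part of the localization sequence: for a smooth DM stack $Z$ with open complement $U$ of divisors $D_1,\ldots,D_s$ one has the full exact sequence
$$0\to \mathcal{O}(Z)^*/k^*\to \mathcal{O}(U)^*/k^*\to \bigoplus_i\mathbb{Q}\cdot[D_i]\to\pic(Z)_{\mathbb{Q}}\to\pic(U)_{\mathbb{Q}}\to 0,$$
i.e.\ the kernel of your $\alpha$ is the group of units on the open part modulo those extending over the boundary. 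Running this for both pairs and using $\mathcal{O}(\overline{X})^*=k^*$ (properness) gives $\rk\pic(X)_{\mathbb{Q}}=\rk(\mathcal{O}(X)^*/k^*)-\rk(\mathcal{O}(Y)^*/k^*)-r$ and $\rk\pic(\overline{X})_{\mathbb{Q}}=n-r-\rk(\mathcal{O}(Y)^*/k^*)$. This is the rank bookkeeping that actually closes the argument, and it also shows that one needs the extra input $\mathcal{O}(Y)^*=k^*$ (true for the universal curves over $\mathcal{M}_{1,n-k}$ to which the lemma is applied, but not a formal consequence of the stated hypotheses).

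The computation above also reveals that the conclusion you are aiming at is not the right one: taken literally the lemma computes the rank of $\pic(X)_{\mathbb{Q}}$, and that statement is false (take $X=Y=\mathbb{A}^1$ and $\overline{X}=\mathbb{P}^1$: then $n=1$, $m=r=0$, but $\pic(\mathbb{A}^1)_{\mathbb{Q}}=0\neq\mathbb{Q}$). The conclusion actually used in the proof of Theorem \ref{vanishes} --- where $\pic(X)_{\mathbb{Q}}$ is shown to vanish separately and the lemma is invoked only to count relations among boundary divisors --- is that $\pic(\overline{X})_{\mathbb{Q}}$ has rank $n-r$. Your plan of establishing $\rk(\im\alpha)=m-n+r$ therefore cannot succeed as stated (in the example above that number is $-1$); you should instead target $\overline{X}$ and supply the unit-group term described above.
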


We now prove a general result to compute some divisor classes on universal curves in genus $1$. The rational Picard group of $\mathcal{C}_{1,n-1}$ is generated by the classes of the divisors $B_{j,n}$ as we have already observed. Let $\vec{v}=(b_1, \ldots, b_{n-1})$ be a vector of integers and define $a:= \sum b_j$.
\begin{lemma} In $\pic_{\mathbb{Q}}(\mathcal{C}_{1,n-1})$, the class of the locus
\begin{displaymath}
D_{\vec{b}, a}: \quad \left(a y \equiv \sum_{j<n} b_j x_j\right),
\end{displaymath}
(closed and of codimension $1$ by Lemma \ref{closed}) can be expressed as
\begin{displaymath}
[D_{\vec{b}, a}]= a \sum_{j < n} b_j [B_{j,n}].
\end{displaymath}\label{lemmoso}
\end{lemma}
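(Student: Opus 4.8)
The plan is to compute the class $[D_{\vec b,a}]$ inside $\pic_{\mathbb{Q}}(\mathcal{C}_{1,n-1})$ by pinning down its intersection numbers against a spanning set of test curves, and then inverting. Since $\pic_{\mathbb{Q}}(\mathcal{C}_{1,n-1})$ is freely generated by the $(n-1)$ boundary classes $[B_{j,n}]$ (the loci $y \equiv x_j$), it suffices to write $[D_{\vec b,a}] = \sum_{j<n} \mu_j [B_{j,n}]$ and solve for the coefficients $\mu_j$. I would determine the $\mu_j$ by restricting to the fibers of the universal curve $\mathcal{C}_{1,n-1} \to \mathcal{M}_{1,n-1}$: over a fixed smooth elliptic curve $(C,x_1,\ldots,x_{n-1})$, the variable point $y$ ranges over $C$, and both $D_{\vec b,a}$ and each $B_{j,n}$ cut out finite subschemes of $C$. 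Counting the length of these fibers gives the \emph{degree} of each class restricted to a fiber, and this is what I expect to produce the factor of $a$.

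Concretely, I would first observe that fiberwise the locus $B_{j,n}$ is the single point $y = x_j$, so $[B_{j,n}]$ restricts to a degree-$1$ class on the fiber $C$. The locus $D_{\vec b,a}$ restricted to a fixed curve is the set of $y$ with $ay \equiv \sum_{j<n} b_j x_j$ in $\pic^0(C)$; since multiplication by $a$ on an elliptic curve is an isogeny of degree $a^2$, the equation $ay \equiv (\text{fixed point})$ has $a^2$ solutions, so $D_{\vec b,a}$ restricts to a degree-$a^2$ class on the fiber. This already forces $\sum_{j<n}\mu_j = a^2$, consistent with each $\mu_j = a b_j$ since $\sum b_j = a$. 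To separate the individual $\mu_j$, rather than only their sum, I would use the divisor classes more carefully: the natural approach is to pull back along the section-valued construction. One clean route is to note that $\mathcal{O}(a y - \sum b_j x_j)$ restricted to the universal curve is, up to fiber corrections, the pullback of the relevant universal line bundle, and then to compute $[D_{\vec b,a}]$ as the vanishing locus (first Chern class) of a section of a line bundle whose class decomposes in terms of the $[B_{j,n}]$.

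The cleanest rigorous mechanism is to use the morphism $\phi_{\vec a}$ introduced in the earlier remark, or more directly the multiplication-by-$a$ map. I would factor $D_{\vec b,a}$ as the preimage, under the fiberwise ``take $ay$'' map $C \to \pic^0(C) \cong C$, of the point $\sum b_j x_j$; this map is multiplication by $a$ followed by translation, hence finite flat of degree $a^2$, and it multiplies the relevant divisor class by $a$ on the source. Since the target class of the section $y \mapsto ay$ landing on $\sum_{j<n} b_j x_j$ is $\sum_{j<n} b_j [B_{j,n}]$ (each $x_j$ contributes $B_{j,n}$ with multiplicity $b_j$, because the condition $ay \equiv \sum b_j x_j$ degenerates to meeting $x_j$ with the expected multiplicity), pulling this back under the degree-$a$ map produces exactly $a \sum_{j<n} b_j [B_{j,n}]$.

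The main obstacle I anticipate is making the multiplicity bookkeeping precise: naively one sees $a^2$ solutions fiberwise and a factor of $a$ from the isogeny, and it is easy to double-count or to miss where the second factor of $a$ goes. The delicate point is distinguishing the \emph{degree of the class} (which is $a^2$ fiberwise, matching $\sum_j a b_j = a^2$) from the coefficient $ab_j$ attached to each individual $[B_{j,n}]$, and verifying that the translation part of the isogeny does not contribute extra boundary classes. I would resolve this by working relatively over $\mathcal{M}_{1,n-1}$ with the universal Poincaré bundle, expressing $[D_{\vec b,a}]$ as $c_1$ of an explicit line bundle and matching coefficients against the basis $\{[B_{j,n}]\}$ using the fiberwise degree computation as the anchoring normalization. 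Given that $\pic_{\mathbb{Q}}(\mathcal{C}_{1,n-1})$ is free on the $[B_{j,n}]$ and the coefficients are determined by finitely many intersection numbers, once the multiplicities are correctly accounted for the identity $[D_{\vec b,a}] = a\sum_{j<n} b_j [B_{j,n}]$ follows.
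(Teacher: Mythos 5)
Your strategy --- realize $D_{\vec b,a}$ as the preimage of a section under the relative multiplication-by-$a$ map and pull back divisor classes --- can be made to work, but the step you rely on is misstated, and it is exactly where the content of the lemma lives. First, the ``target'' of the map $y\mapsto ay$ is a \emph{section} of $\mathcal{C}_{1,n-1}\to\mathcal{M}_{1,n-1}$, namely the locus $z\equiv \sum_{j<n}b_jx_j-(a-1)x_1$; its class is $\sum_{j<n}b_j[B_{j,n}]-(a-1)[B_{1,n}]$ (fiberwise degree $1$, computable from the $a=1$ case of the lemma), not $\sum_{j<n}b_j[B_{j,n}]$, which has fiberwise degree $a$ and cannot be the class of a section. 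Second, the pullback $[a]^*$ is \emph{not} multiplication by $a$ on the whole Picard group: by the theorem of the cube, $[a]^*\mathcal{L}=\mathcal{L}^{(a^2+a)/2}\otimes\left([-1]^*\mathcal{L}\right)^{(a^2-a)/2}$, so it multiplies the fiberwise-degree-zero (antisymmetric) part of a class by $a$ but the symmetric part by $a^2$. Carried out correctly this gives $[a]^*\left(\sum b_j[B_{j,n}]-(a-1)[B_{1,n}]\right)=a\left(\sum b_j[B_{j,n}]-(a-1)[B_{1,n}]\right)+(a^2-a)[B_{1,n}]=a\sum b_j[B_{j,n}]$, which is the claim; as written, your two imprecisions happen to cancel. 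Moreover, the fiberwise degree count ($\sum_j\mu_j=a^2$) is a single linear condition and cannot serve as the ``anchoring normalization'' for $n-1$ unknown coefficients. You have correctly located the delicate point but not resolved it.

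For comparison, the paper's proof avoids the isogeny altogether: it realizes $D_{\vec b,a}$ as the degeneracy locus of the evaluation map from $\mathcal{E}=\pi_{1*}\pi_2^*\mathcal{G}$ (the rank-$a$ bundle of sections of $\mathcal{G}=\mathcal{O}(\sum b_jx_j)$, pulled back from $\mathcal{M}_{1,n-1}$ and hence with $c_1=0$ by the vanishing of $\pic_{\mathbb{Q}}(\mathcal{M}_{1,n-1})$) to the bundle $\mathcal{F}$ of $(a-1)$-jets of $\mathcal{G}$ along the diagonal, and applies Porteous' formula; the jet bundle is filtered with graded pieces $\mathcal{G}|_\Delta\otimes\Omega_\pi^{i}$, and $\Omega_\pi$ is trivial for a family of genus-$1$ curves, so $c_1(\mathcal{F})=a\,c_1(\mathcal{G}|_\Delta)=a\sum b_j[B_{j,n}]$. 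If you prefer your route, you must relativize the theorem-of-the-cube computation above (the corrections pulled back from the base vanish rationally, again because $\pic_{\mathbb{Q}}(\mathcal{M}_{1,n-1})=0$) and first establish the $a=1$ case to identify the class of the target section; neither step is optional.
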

\begin{proof}
Consider the diagonal inclusion inside the fiber product of universal curves
\begin{equation} \label{picture2} \xymatrix{ \mathcal{C}_{1,n-1} \ar[r]^{\Delta}
 &\mathcal{C}^2_{1,n-1} \ar^{{\pi}_2}[r] \ar_{\pi_1}[d]\ar@{}[rd]|{\square} & \mathcal{C}_{1,n-1} \ar^{\pi}[d]\\
 &\mathcal{C}_{1,n-1}  \ar[r]  & \mathcal{M}_{1,n-1},\\
}\end{equation}
and define the line bundle \begin{displaymath}\mathcal{G}:=\mathcal{O}_{\mathcal{C}}\left(\sum_{j<n} b_j x_j\right).\end{displaymath} There is a natural evaluation map $ev$ from the bundle of sections of $\mathcal{G}$ to the $(a-1)$-jet bundle of $\mathcal{G}$:
\begin{displaymath}
ev\colon \mathcal{E}:=\pi_{1 *} \pi_2^* (\mathcal{G}) \to \mathcal{F}:=\pi_{1 *} \left(\pi_2^*(\mathcal{G}) \otimes \frac{\mathcal{O}_{\mathcal{C}^{k+1}}}{\Delta_{\mathcal{C}^k}} \right).
\end{displaymath}
The divisor $D_{\vec{b}, a}$ is the degeneration locus of $ev$, and thus we can apply Porteous' formula, to obtain
\begin{equation}
[D_{\vec{b}, a}]= (c_1(\mathcal{F})- c_1(\mathcal{E})) \cap [\mathcal{C}_{1,n}].
\end{equation}
The bundle $\mathcal{E}$ is the pull-back of a bundle from $\mathcal{M}_{1,n-1}$, and therefore it has zero first Chern class by Theorem \ref{vanishing}. On the other hand, it is a general fact that the bundle $\mathcal{F}$ has the same Chern classes as
\begin{displaymath}
\mathcal{G} \otimes \left( \mathcal{O}_{\mathcal{C}} \oplus \Omega_{\pi} \oplus \ldots \oplus \Omega_{\pi}^{a-1} \right).
\end{displaymath}
This concludes the statement, after observing that $c_1(\mathcal{G}) \cap [\mathcal{C}_{1,n}]$ equals by definition $\sum_{j < n} b_j [B_{j,n}]$, and that $\Omega_{\pi}$ is trivial since $\pi$ is a family of genus-$1$ curves.
\end{proof}

 In \eqref{divisoris} we defined the loci $D_{j,n}$. From now on, as we are working in the totally ramified cases, we always have that the $D_{j,n}$ are codimension-$1$ substack of $\mathcal{C}_{1,n-1}$ (the case when $l=0$ in \eqref{divisoris}).
 We are now in a position to calculate the class of $[D_{j,n}]$ as a linear combination of the  classes $[B_{i,n}]$ in $\mathcal{C}_{1,n-1}$. This is a straightforward consequence of Lemma \ref{lemmoso}, where we take the point $y_k'$ as the parameter for the universal curve.

\begin{corollary} \label{lincomb} The following relations hold in the rational Picard group of $\mathcal{C}_{1,n-1}$
\begin{displaymath}
[D_{j,n}] = o(r_k) \left( [B_{j,n}]+  \sum_{i=1}^{n-1} b_{k i} [B_{i,n}]  + \left(o(r_k)- \sum_{i=1}^n b_{k i}\right) [B_{1,n}]\right).
\end{displaymath}
\end{corollary}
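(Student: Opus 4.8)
The plan is to apply Lemma \ref{lemmoso} directly, using the point $y_k'$ as the parameter $y$ for the universal curve $\mathcal{C}_{1,n-1}$. Looking at the defining equation \eqref{divisoris} for $D_{j,n}$ (in the totally ramified case, so the $y_s''$ terms are absent and $l=0$), we see that it has precisely the shape of the locus $D_{\vec{b}, a}$ from Lemma \ref{lemmoso}, once we rearrange it. First I would rewrite the congruence \eqref{divisoris} by moving everything to one side, so that it reads $o(r_k)\, y_k' \equiv \sum_{j<n}(\text{coefficients})\, x_j$, where the $x_j$ (including $x_1$) play the role of the fixed marked points and $y_k'$ plays the role of the universal point $y$. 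Reading off the coefficients from \eqref{divisoris}, the multiplier of $y_k'$ is $a = o(r_k)$, the coefficient of $x_j$ (for the particular index $j$ being excluded) picks up an extra $+1$ from the right-hand side $\equiv x_j$, the coefficient of $x_i$ for $i \neq 1, j$ is $b_{ki}$, and the coefficient of $x_1$ absorbs the term $\left(o(r_k) - \sum_{s=1}^n b_{ks}\right)$.

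The second step is simply to feed this vector of coefficients into the formula $[D_{\vec{b}, a}] = a \sum_{j<n} b_j [B_{j,n}]$ from Lemma \ref{lemmoso}. Here $a = o(r_k)$, so the overall factor $o(r_k)$ in front matches the lemma's prefactor. The three groups of terms inside the parentheses of the corollary's stated formula then correspond, respectively, to the $+x_j$ contribution (giving the isolated $[B_{j,n}]$), the $\sum_i b_{ki}[B_{i,n}]$ contribution, and the $x_1$-coefficient contribution $\left(o(r_k) - \sum_i b_{ki}\right)[B_{1,n}]$. In other words, the corollary is obtained by transcribing the coefficient vector of \eqref{divisoris} and substituting into Lemma \ref{lemmoso}.

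I expect no genuine obstacle here: this is a bookkeeping computation rather than a conceptual one, since the heavy lifting (the Porteous-formula argument and the vanishing of $c_1(\mathcal{E})$ and triviality of $\Omega_\pi$) has already been carried out in Lemma \ref{lemmoso}. The only point requiring a little care is making sure the coefficient of $x_1$ is bookkept correctly: because $x_1$ is both the origin of the elliptic curve and the marked point that carries the \virg{correction term} $o(r_k) - \sum b_{ks}$, one must be careful not to double-count its contribution. A clean way to handle this is to observe that the total coefficient of $x_1$ in the rearranged equation is $b_{k1} + \left(o(r_k) - \sum_{s=1}^n b_{ks}\right)$, but since the corollary separates the generic $\sum_i b_{ki}[B_{i,n}]$ sum (which already includes the $i=1$ term $b_{k1}[B_{1,n}]$) from the explicit $\left(o(r_k)-\sum b_{ki}\right)[B_{1,n}]$ correction, the two pieces add up correctly. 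Once this is verified, the result follows immediately from Lemma \ref{lemmoso}.
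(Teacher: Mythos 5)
Your proposal is correct and follows exactly the paper's own argument, which is simply to invoke Lemma \ref{lemmoso} with $y_k'$ as the parameter of the universal curve and transcribe the coefficient vector of \eqref{divisoris}. The only detail worth making explicit (which you implicitly use) is that the hypothesis $a=\sum b_j$ of Lemma \ref{lemmoso} holds here because $b_{kn}=1$, so the coefficients on the right-hand side of the rearranged congruence indeed sum to $o(r_k)$.
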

That $[D_{j,n}]$ can be expressed as a linear combination of the $[B_{i,n}]$ is a consequence of the fact that the latter generate the rational Picard group of $\mathcal{C}_{1,n-1}$. What we want here is to show that the explicit such expression can be inverted to express $[B_{j,n}]$ as a rational linear combination of the classes $[D_{i,n}]$.
\begin{lemma} \label{invert} The matrix of Corollary \ref{lincomb}, expressing the classes of the codimension-$1$ loci $[D_{j,n}]$ in terms of the $[B_{i,n}]$,  is invertible over $\mathbb{Q}$.
\end{lemma}

\begin{proof} As we work over the rationals, we can safely divide each line by $o(r_k)$. The resulting matrix is now obtained by perturbing, with $+1$ on the diagonal, the $(n-1) \times (n-1)$ matrix with equal rows, whose $i$-th element is
\begin{displaymath}
\begin{cases} o(r_k)- \sum_{s=1}^n b_{k s}+ b_{k 1} & i=1, \\ b_{k i} & i >1.
\end{cases}
\end{displaymath}
The sum of the elements in each row equals $o(r_k) \neq 0$. This completes the proof. \end{proof}

To conclude the section, let us see how the proof of Theorem \ref{vanishes} can be simplified, in the cases when $G=R$ is either cyclic or a $p$-group.

\begin{remark} \label{remarkp}(Cf. Remark \ref{pgroup}) When $G$ is either a cyclic group or a $p$-group, we have expressed the moduli stack $\mathcal{M}_{1,n}(h_1, \ldots, h_n)$ as the complement of the divisors $D_{j,i}$ inside $\mathcal{C}^k_{1,n-k}$, which in turn contains $\mathcal{M}_{1,n}$ as the complement of the divisors $B_{j,i}$ defined in \eqref{bij}. We are therefore adding $(n-1) + \ldots+ (n-k)$ divisors to $\mathcal{M}_{1,n}$, and then removing the same number of codimension-$1$ loci. In analogy with what is done in Corollary \ref{lincomb} and Lemma \ref{invert} we can express the classes of the $D_{j,i}$ in terms of the classes of the $B_{j,i}$, and then prove that the resulting matrix is invertible, thus concluding the proof of Theorem \ref{vanishes} more directly.
\end{remark}

\section{Moduli spaces of admissible bielliptic curves} \label{modulibiell}
In this section we summarize and recollect our results in the important example of moduli of bielliptic curves of genus $g>1$. By Riemann-Hurwitz, we have $2g-2>0$ ramification and branch points. The starting point of this section is this straightforward Corollary of Theorem \ref{vanishes}.
\begin{corollary} \label{linindip} The boundary strata classes in the Picard group of the moduli stack
\begin{equation} \label{modulistable}
\overline{\mathcal{M}}_{1,n}( B \mathbb{Z}/ 2\mathbb{Z}; \underbrace{1, \ldots, 1}_{(2g-2)}, \underbrace{0, \ldots, 0}_{(n-2g+2)})
\end{equation}
are linearly independent when $g \neq 2$ and they have one relation when $g=2$.
\end{corollary}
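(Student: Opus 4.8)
The plan is to apply the machinery of Theorem \ref{vanishes} to the specific bielliptic setting \eqref{modulistable}, where $G = \mathbb{Z}/2\mathbb{Z}$, and then to count the relations explicitly. Since $G$ is cyclic, I am in the favorable situation of Remark \ref{remarkp}: here $R = G = \mathbb{Z}/2\mathbb{Z}$ is generated minimally by any single one of the $2g-2$ ramifications equal to $1$, so the minimal generating cardinality is $k=1$. Thus the component of the moduli stack is obtained from $\mathcal{C}_{1,n-1}$ (the single universal curve over $\mathcal{M}_{1,n-1}$) by removing the $(n-1)$ loci $D_{j,n}$, while $\mathcal{M}_{1,n}$ sits inside $\mathcal{C}_{1,n-1}$ as the complement of the $(n-1)$ boundary divisors $B_{j,n}$. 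The first step is to invoke Theorem \ref{vanishes} to conclude that the number of independent relations among the boundary strata of the admissible-cover compactification \eqref{modulistable} equals $r = (\text{number of irreducible components of the } D_{j,n}) - (n-1)$.

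The heart of the argument is then the count, via Corollary \ref{componenti}, of the number of irreducible components of each locus $D_{j,n}$. First I would write down, from \eqref{divisoris} with $l=0$, $o(r_k)=2$, and $b_{k j}$ equal to the appropriate reductions dictated by the ramification vector $(1,\ldots,1,0,\ldots,0)$, the explicit integer vector $\vec{a}=(a_1,\ldots,a_n)$ defining each $D_{j,n}$. By Corollary \ref{componenti} the locus cut out by $\sum a_i x_i \equiv 0$ in genus $1$ has $\phi(a_1,\ldots,a_n) = \phi(\gcd(a_1,\ldots,a_n))$ irreducible components, where $\phi$ is the Euler totient. So the task reduces to computing, for each $j$, the greatest common divisor of the coefficient vector of $D_{j,n}$, and summing $\phi$ of these gcd's over all $(n-1)$ loci. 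Because every nonzero ramification equals $1$ and $o(r_k)=2$, these coefficients are small integers (essentially $0,\pm 1, 2$), so each gcd will be either $1$ or $2$, contributing $\phi(1)=1$ or $\phi(2)=1$ respectively — except in the degenerate arithmetic that occurs for small $g$.

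The anticipated main obstacle, and the source of the case distinction $g \neq 2$ versus $g=2$, is precisely this gcd computation: for generic $g$ every $D_{j,n}$ turns out to be irreducible, giving a total of exactly $(n-1)$ components and hence $r=0$, i.e. no relations and linear independence of the boundary classes; but when $g=2$ the coefficient vector of one of the loci $D_{j,n}$ acquires an extra common factor (reflecting that with only $2g-2=2$ ramification points the relevant coefficient degenerates), producing one additional component and therefore $r = n - (n-1) = 1$ relation. I would therefore carry out the plan in the order: (i) specialize the setup of Remark \ref{remarkp} to $k=1$, $G=\mathbb{Z}/2\mathbb{Z}$; (ii) write the explicit coefficient vectors of the $D_{j,n}$ from \eqref{divisoris}; (iii) compute their gcd's and apply Corollary \ref{componenti} to count components; (iv) read off $r$ from Theorem \ref{vanishes}. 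The delicate step is (iii), where the exceptional behavior at $g=2$ must be isolated by tracking exactly when the gcd jumps.
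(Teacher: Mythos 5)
Your proposal follows essentially the same route as the paper: specialize Theorem \ref{fondamentale} (via Remark \ref{remarkp}, with $k=1$ since $G=\mathbb{Z}/2\mathbb{Z}$ is generated by a single ramification) to realize the open part as the complement of the $n-1$ loci $D_{j,n}$ in $\mathcal{C}_{1,n-1}$, count their irreducible components via Corollary \ref{componenti}, and read off the number of relations from Theorem \ref{vanishes}; your final answer ($r=0$ for $g\neq 2$, $r=1$ for $g=2$) agrees with the paper's. One small warning on step (iii): taken literally, ``$\phi(2)=1$'' would give only one component for the $g=2$ locus $2(y-x_1)\equiv 0$ and hence $r=0$; the correct count (as the proof of Corollary \ref{componenti} makes clear) is the number of subgroups of $E=\mathbb{Z}/\gcd\mathbb{Z}$, i.e.\ two components when the gcd is $2$, which is what your subsequent ``one additional component'' assertion implicitly uses --- so make sure the component count is justified by the subgroup description rather than by the totient of the gcd.
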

In \cite[(6.11)]{pagani2}, we have already observed the existence of this bielliptic relation in genus $2$, and we have written it down explicitly in terms of the boundary divisors.

Let us briefly review our description of moduli of bielliptic curves obtained in the previous section.
The open subscheme of \eqref{modulistable} that parametrizes smooth bielliptic curves, $\mathcal{M}_{1,n}( B \mathbb{Z}/ 2\mathbb{Z}; 1, \ldots, 1, 0, \ldots, 0)$,
  has been described by virtue of Theorem \ref{fondamentale} as the complement of the $n-1$ divisors
\begin{displaymath}
D_{j,n}: \quad 2 y - \sum_{i=1}^{2g-3} x_i + \left(2g- 4\right) x_1 - x_j \equiv 0
\end{displaymath}
in the universal curve $\mathcal{C}_{1,n-1}$ (parametrized by $y$) over $\mathcal{M}_{1,n-1}$.
By applying Corollary \ref{componenti}, we see that the loci $D_{j,n}$ are actually all irreducible, unless $g$ equals $2$, in which case there is only one locus $D_{12}$, which consists of two irreducible components; the statement of Corollary \ref{linindip} then follows from Theorem \ref{vanishes}.

Let us now fix $n=2g-2>0$. It is easy to see that there are $2^n-n$ boundary divisors in $\overline{\mathcal{M}}_{1,n}$, and they are independent by Theorem \ref{vanishing}. One divisor generically parametrizes  curves that are irreducible and of geometric genus $0$; we call this divisor $\Delta_0$. The other divisors are indexed by subsets $I$ of  $[n]$ with $|I|\geq2$. Each such divisor $\Delta_I$ generically parametrizes curves with one node that separates a smooth genus $0$ component with marked points in the set $I$ from a smooth genus $1$ component with the remaining marked points $[n] \setminus I$. Let us analyze the inverses of these divisors under the forgetful map
\begin{displaymath}
\overline{\mathcal{M}}_{1,n}( B \mathbb{Z}/ 2\mathbb{Z}; 1, \ldots, 1) \to \overline{\mathcal{M}}_{1,n}.
\end{displaymath}
The fiber over $\Delta_{[n]}$ consists of two irreducible components, we call $\Theta_{[n]}$ the component where the double cover of the elliptic curve is non-trivial, and $\Theta_*$ the component where the double cover is trivial. The fiber over $\Delta_0$ also splits in two components: one where the node is generically a branch point of the cover, which we call $\Delta_0^{br}$, and  one where the node is a smooth point, which we call $\Delta_0^{et}$. The fiber over $\Delta_I$ consists of one component when $I \neq [n]$, we call this component $\Theta_I$ when $|I|$ is even, and $\Xi_I$ when $|I|$ is odd.  When $|I|$ is even the generic fiber is obtained by gluing a hyperelliptic curve of genus $g_1$ with a bielliptic curve of genus $g_2$ at two orbits on each curve under the distinguished involution. In this case, we have that $g_1+g_2+1=g$ and $|I|=2g_1+2$. On the other hand, when $|I|$ is odd, the generic fiber is obtained by gluing a hyperelliptic curve of genus $g_1$ with a bielliptic curve of genus $g_2$ at two ramification points. Here we have $g_1+g_2=g$ and $|I|+1= 2g_1+2$. In Figure \ref{figura} we give a picture of the general element of the boundary divisors.

\begin{figure}[ht]
\centering
\tiny
\psfrag{2}{$\mathcal{H}_{g-2}$}
\psfrag{1}{$\mathcal{H}_{g-1}$}
\psfrag{7}{$\mathcal{B}_{g-2}$}
\psfrag{6}{$\mathcal{M}_{1,1}$}
\psfrag{5}{$$}
\psfrag{3}{\hspace{-0.2cm}$\mathcal{H}_{g_1}$}
\psfrag{4}{$\mathcal{B}_{g_2}$}
\psfrag{8}{$\hspace{-0.1cm}\mathcal{H}_{g_1}$}
\psfrag{9}{$\mathcal{B}_{g_2}$}

\begin{tabular}{ccccccccc}
\includegraphics[scale=0.2]{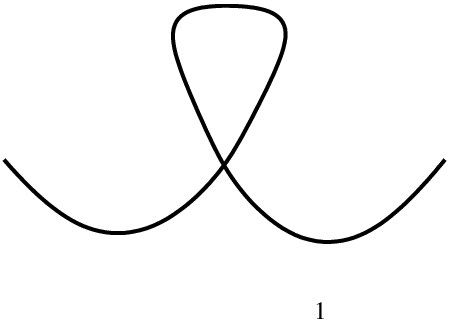}&&
\includegraphics[scale=0.2]{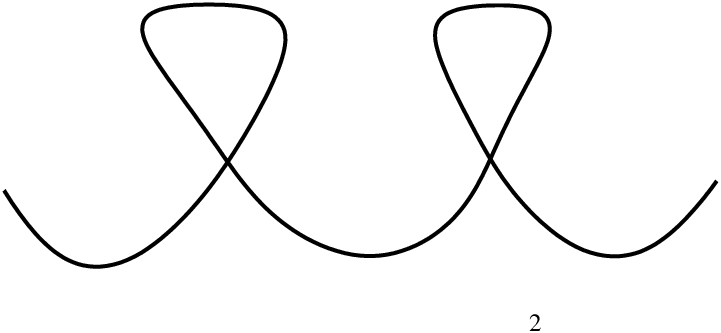}&&
\includegraphics[scale=0.2]{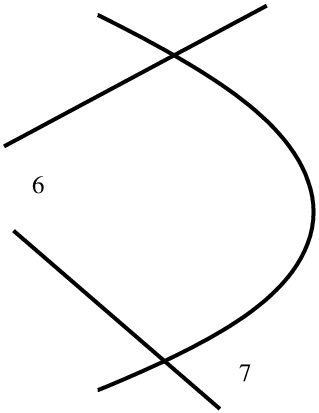}&&
\includegraphics[scale=0.2]{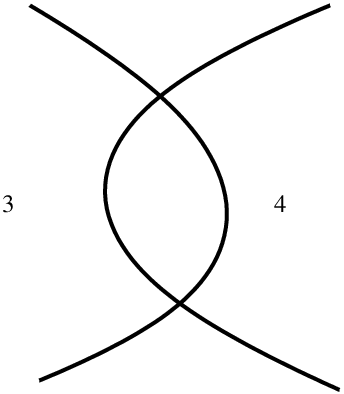} &&
\includegraphics[scale=0.2]{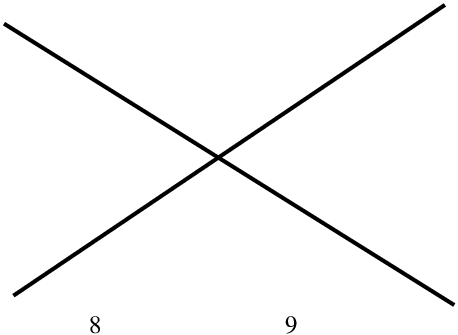}
\\
\end{tabular}
\caption{\label{figura} The general element of $\Delta_0^{br}, \Delta_0^{et}, \Theta_*, \Theta_{[I]}, \Xi_{[I]}$ respectively.}
\end{figure}

Let us denote with $\overline{\mathcal{B}}_g$ the stack quotient of \eqref{modulistable} via the natural action of the symmetric group $\mathfrak{S}_n$; the invariant boundary divisors are $n$ in the case of $\overline{\mathcal{M}}_{1,n}$, and $2g$ in the case of $\overline{\mathcal{B}}_g$. We have the following explicit numerical results on the rational Picard groups, where $\delta_{2,g}$ is the Kronecker delta.

\begin{corollary} \label{numerico} The Picard number of $\overline{\mathcal{M}}_{1,n}(B \mathbb{Z}/ 2\mathbb{Z}; 1, \ldots, 1)$ is $2^n+2-n-\delta_{2,g}$, the Picard number of $\overline{\mathcal{B}}_g$ equals $2 g- \delta_{2,g}$.  \end{corollary}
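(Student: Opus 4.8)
The plan is to deduce Corollary \ref{numerico} as a bookkeeping consequence of Corollary \ref{linindip} and Theorem \ref{vanishing}. First I would handle the ordered case, namely the Picard number of $\overline{\mathcal{M}}_{1,n}(B\mathbb{Z}/2\mathbb{Z}; 1, \ldots, 1)$ with $n = 2g-2$. By Theorem \ref{vanishing}, the rational Picard group of $\overline{\mathcal{M}}_{1,n}$ is freely generated by its boundary divisors, and as observed in the paragraph preceding the corollary there are exactly $2^n - n$ of these (one divisor $\Delta_0$, plus one $\Delta_I$ for each subset $I \subseteq [n]$ with $|I| \geq 2$, the count being $2^n - 1 - n$ such subsets plus the single $\Delta_0$). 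The key point is then to count the boundary divisors of the admissible covers compactification and to use Corollary \ref{linindip} to determine how many independent relations they satisfy.

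Next I would count the boundary divisors of the cover space by tabulating the fibers of the forgetful map $\overline{\mathcal{M}}_{1,n}(B\mathbb{Z}/2\mathbb{Z}; 1, \ldots, 1) \to \overline{\mathcal{M}}_{1,n}$, exactly as set up in the discussion accompanying Figure \ref{figura}. Over $\Delta_0$ there are two components ($\Delta_0^{br}$ and $\Delta_0^{et}$); over $\Delta_{[n]}$ there are two ($\Theta_{[n]}$ and $\Theta_*$); and over each remaining $\Delta_I$ with $2 \leq |I| < n$ there is exactly one component ($\Theta_I$ or $\Xi_I$). Since there are $2^n - n$ boundary divisors downstairs, two of which ($\Delta_0$ and $\Delta_{[n]}$) each acquire an extra component while all others contribute one, the total number of boundary divisors upstairs is $(2^n - n) + 2 = 2^n + 2 - n$. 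Corollary \ref{linindip} asserts that these boundary classes are linearly independent when $g \neq 2$ and satisfy exactly one relation when $g = 2$; hence the rank of the rational Picard group is $2^n + 2 - n$ when $g \neq 2$ and $2^n + 2 - n - 1$ when $g = 2$. Writing $\delta = \delta_{2,g}$ unifies these into the claimed $2^n + 2 - n - \delta$.

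For the unordered space $\overline{\mathcal{B}}_g = [\eqref{modulistable}/\mathfrak{S}_n]$, I would pass to $\mathfrak{S}_n$-invariants: the rational Picard group of the quotient stack is the $\mathfrak{S}_n$-invariant part of the rational Picard group upstairs, so its rank equals the number of $\mathfrak{S}_n$-orbits of boundary divisors, minus the number of independent relations surviving in the invariant part. The orbits are already enumerated in the text as the invariant boundary divisors: $\Delta_0^{br}$, $\Delta_0^{et}$, $\Theta_*$, $\Theta_{[n]}$, together with one orbit $\Theta_I$ for each even $|I|$ in the range $4 \leq |I| \leq n-2$ and one orbit $\Xi_I$ for each odd $|I|$ in the range $3 \leq |I| \leq n-1$. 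Tallying these against $g$ (using $|I| = 2g_1 + 2$ for the even case and $|I| + 1 = 2g_1 + 2$ for the odd case, with $g_1$ ranging over the admissible genus splittings) gives the stated count of $2g$ invariant divisors. Since the single $g=2$ relation of Corollary \ref{linindip} is itself $\mathfrak{S}_n$-invariant (it is the unique relation, hence preserved by the group action), it descends to one relation among the invariant divisors precisely when $g = 2$, so the Picard number of $\overline{\mathcal{B}}_g$ is $2g - \delta$.

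The main obstacle I anticipate is the careful combinatorial verification that the invariant boundary divisors number exactly $2g$: this requires matching the topological types from Figure \ref{figura} against the range of genus splittings and confirming that no two distinct subsets $I$ of the same cardinality yield non-isomorphic (hence distinct) invariant divisors, and conversely that the enumeration exhausts all boundary types. The descent of the $g=2$ relation to the invariant part is essentially automatic once one notes the relation is unique and therefore fixed by $\mathfrak{S}_n$, but I would want to confirm that taking $\mathfrak{S}_n$-invariants of rational Picard groups commutes with the relation-counting, which follows from the exactness of invariants in characteristic zero (or with $\chara k \nmid |\mathfrak{S}_n|$, which in any case holds generically here).
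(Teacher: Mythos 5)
Your overall route is the same as the paper's implicit one: count the boundary divisors upstairs via the fibers of the forgetful map, apply Corollary \ref{linindip} for the relations, and pass to $\mathfrak{S}_n$-invariants for $\overline{\mathcal{B}}_g$. But there is one substantive gap. Linear independence of the boundary classes (Corollary \ref{linindip}) only bounds the Picard number from \emph{below}; to conclude that the rank equals the number of boundary divisors minus $\delta$ you must also know that the boundary classes \emph{generate} the rational Picard group of the compactification. This is exactly what the first sentence of Theorem \ref{vanishes} supplies: the rational Picard group of the open stack $\mathcal{M}_{1,n}(B\mathbb{Z}/2\mathbb{Z};1,\ldots,1)$ vanishes (the cover is totally ramified), so by the excision sequence for divisor class groups the boundary classes span $\pic_{\mathbb{Q}}$ of $\overline{\mathcal{M}}_{1,n}(B\mathbb{Z}/2\mathbb{Z};1,\ldots,1)$. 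You cite Theorem \ref{vanishing} (which concerns $\overline{\mathcal{M}}_{1,n}$ itself) but never Theorem \ref{vanishes}; without it the step ``independent, hence the rank is the count'' is a non sequitur.

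Two smaller points. First, your enumeration of the $\mathfrak{S}_n$-orbits is off by one: the even range should be $2\leq |I|\leq n-2$ (the case $g_1=0$, $|I|=2g_1+2=2$, does occur), not $4\leq |I|\leq n-2$; with your ranges you would get $2g-1$ orbits rather than $2g$. The correct tally is $4$ orbits from $\Delta_0^{br},\Delta_0^{et},\Theta_*,\Theta_{[n]}$ plus one orbit for each cardinality $2\leq |I|\leq n-1$, i.e.\ $n-2$ more, totalling $n+2=2g$. Second, ``the relation is unique, hence fixed by $\mathfrak{S}_n$'' only shows that the \emph{line} of relations is $\mathfrak{S}_n$-stable; a priori the group could act on it by a nontrivial character, in which case no relation would survive in the invariant part and $\overline{\mathcal{B}}_2$ would have Picard number $2g$ rather than $2g-1$. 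Here the worry is vacuous because for $g=2$ one has $n=2$ and each of the four boundary divisors $\Delta_0^{br},\Delta_0^{et},\Theta_{[2]},\Theta_*$ is individually $\mathfrak{S}_2$-invariant, so the action on the whole of $\pic_{\mathbb{Q}}$ is trivial --- but that, and not uniqueness of the relation, is the argument you need.
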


One can perform a similar study of the boundary components of the moduli stack  of admissible double covers of curves of genus $h >1$. As one can easily see, there are $g(h+1) -2h (h+1) + \lfloor \frac{ 5h +6}{2} \rfloor$ irreducible boundary divisors, when the branch points are unordered. Of course in higher genus the rational Picard group of the open moduli spaces will no longer be trivial. Still, we believe it is reasonable to expect that the open part should be generated by some decoration of the $\lambda$-classes, which should only depend on $h$ and not on $g$.

\begin{conjecture} The Picard number of moduli of admissible double covers of curves of genus $h>1$ is $(h+1)g + g \cdot o(1)$ for $g \to \infty$.
\end{conjecture}

In conclusion, let us remark that it is not simple to write down a closed formula for the number of relations among the boundary divisors of the totally ramified components of $\overline{\mathcal{M}}_{1,n}(BG)(h_1, \ldots, h_n)$, even when $G$ is cyclic. One treatable case is when $G$ is a $p$-group. For example, when $G=\mathbb{Z}/p^{\alpha}\mathbb{Z}$, we can choose $\langle g \rangle = G$ and write $h_i= \sum a_i g_i$, assuming wlog that $a_n=1$. Then there are no relations when $\gcd(a_1, \ldots, a_{n-1}, p^{\alpha})$ equals $1$; if $\gcd(a_1, \ldots, a_{n-1}, p^{\alpha})$ equals $p^{\beta}$, the space of relations has dimension $\phi(p^{\beta})=p^{\beta}(1- 1/p)$.

\section{Appendix - Some results on finite abelian groups}

We now prove a series of lemmas in the theory of finite abelian groups, which we need in the main body of the paper, but for which we could not find any reference. We use the same notation as in the main body, for example we denote with $R$ the group.

 When $R$ is a finite abelian $p$-group, with a given set of minimal generators, we can find a basis of $R$ such that the matrix that represents the generators in terms of the basis is triangular.

\begin{lemma} \label{tecnicop} Let $R$ be a finite abelian $p$-group, and $\{h_1, \ldots, h_k\}$ a minimal generating subset. Then up to reordering the elements $h_j$, we can find a basis $R=\langle r_1\rangle \oplus \ldots \oplus \langle r_{k}\rangle$, such that, for all $0 \leq i < k$, the projections of $h_{k-i}$ and of $r_{k-i}$ in the quotient $R/ \langle r_k, \ldots, r_{k-i+1}\rangle$ are equal.
\end{lemma}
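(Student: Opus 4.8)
The plan is to argue by induction on the number $k$ of generators, the crux being to peel off one of the given generators, say $h_k$ after reordering, so that $\langle r_k\rangle:=\langle h_k\rangle$ is a cyclic \emph{direct summand} of $R$, then pass to the quotient $R/\langle r_k\rangle$ and reassemble. I would first recall the standard facts that for a finite abelian $p$-group the cardinality of any minimal generating set equals $\dim_{\mathbb{F}_p}(R/pR)$, and that the images of a minimal generating set form an $\mathbb{F}_p$-basis of $R/pR$; in particular each $h_j$ lies outside $pR$. The base case $k=1$ is immediate: $R$ is cyclic and one sets $r_1:=h_1$.

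The key preliminary step is to show that, after reordering, $h_k$ may be taken to have order equal to the exponent $p^{e_1}$ of $R$. Indeed, if every $h_j$ had order $<p^{e_1}$, then each would lie in the subgroup $R[p^{e_1-1}]$ of elements killed by $p^{e_1-1}$, forcing $R=\langle h_1,\dots,h_k\rangle\subseteq R[p^{e_1-1}]$ and hence $p^{e_1-1}R=0$, contradicting that the exponent is $p^{e_1}$. With such an $h_k$ in hand I would invoke the classical lemma that an element of maximal order in a finite abelian $p$-group generates a cyclic direct summand, so that $R=\langle h_k\rangle\oplus R'$ for some $R'$; set $r_k:=h_k$.

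Next I would descend along $\pi\colon R\to R/\langle r_k\rangle$. Since $h_k\notin pR$, the classes $\pi(h_1),\dots,\pi(h_{k-1})$ generate $R/\langle r_k\rangle$, which needs exactly $k-1$ generators, so they form a minimal generating set of the smaller $p$-group $R/\langle r_k\rangle\cong R'$. By the inductive hypothesis there is, after reordering, a basis $\bar r_1,\dots,\bar r_{k-1}$ of $R/\langle r_k\rangle$ with $\pi(h_j)\equiv\bar r_j \pmod{\langle \bar r_{k-1},\dots,\bar r_{j+1}\rangle}$ for $1\le j\le k-1$. Using the splitting $R=\langle r_k\rangle\oplus R'$ I would lift $\bar r_1,\dots,\bar r_{k-1}$ to a basis $r_1,\dots,r_{k-1}$ of $R'$, obtaining $R=\langle r_1\rangle\oplus\cdots\oplus\langle r_k\rangle$. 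Pulling the congruences back along $\pi$ then gives $h_j-r_j\in\langle r_{k-1},\dots,r_{j+1}\rangle+\langle r_k\rangle=\langle r_k,\dots,r_{j+1}\rangle$, which is exactly the asserted triangularity condition, the case $i=0$ being $h_k=r_k$ by construction.

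I expect the only genuine subtlety to be the requirement that the distinguished generator $r_k$ be one of the prescribed $h_j$ rather than an arbitrary maximal-order element; this is precisely what forces the order argument of the second paragraph and explains the reordering in the statement. The direct-summand lemma for maximal-order elements is classical and can simply be cited, and everything else is bookkeeping with the exact sequence $0\to\langle r_k\rangle\to R\to R/\langle r_k\rangle\to 0$ together with the observation that a generating set whose cardinality equals the minimal number of generators is automatically minimal.
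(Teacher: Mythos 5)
Your proposal is correct and follows essentially the same route as the paper: both arguments single out, after reordering, a generator of maximal order, invoke the classical fact that such an element generates a cyclic direct summand, and then descend to the quotient $R/\langle r_k\rangle$ with the remaining generators; your induction is just the paper's $k$-step iteration phrased recursively. Your writeup also supplies details the paper leaves implicit (the exponent argument for the existence of a maximal-order $h_j$, and the minimality of the image generating set via $\dim_{\mathbb{F}_p}(R/pR)$), and it rightly avoids the paper's unnecessary and in fact incorrect aside that the maximal-order generator is unique.
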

\begin{proof} To construct the basis $r_1, \ldots, r_k$, we use the following fact: with the hypothesis of the lemma, an element of maximal order generates a direct summand of the group. There must be an element of maximal order among the elements $h_i$ (because they are generating) and this element must be unique (because they are minimal). We assume that this element is $h_n$ and define $r_k:=h_n$. The element $r_k$ generates a direct summand in $R$, so we consider $\{h_1, \ldots, h_{k-1}\}$: it is a minimal generating set for the quotient $R/\langle r_k \rangle$. Again we must have an element, assume it is $h_{k-1}$, whose image in the quotient has maximal order. So we define $r_{k-1}:= h_{k-1}$, and so on. The process is concluded in precisely $k$ steps, otherwise the starting set $\{h_1, \ldots, h_k\}$ would either be not generating or not minimal. (This proves, in particular, that for a $p$-group $R$ the cardinality of a minimal generating subset is a well-defined integer $k$).
\end{proof}

In the case of a general finite abelian group $R$ and a minimal set of generators, it is not always possible to find a basis such that the matrix representing the generators in terms of the basis is triangular. Anyway, following the idea of the proof of Lemma \ref{tecnicop}, we can prove the following, which is used in Theorem \ref{fondamentale}.
\begin{lemma} \label{tecnico} Let $R$ be a finite abelian group, and $\{h_1, \ldots, h_k\}$ a minimal generating subset. Up to reordering the elements $h_j$, there exists a basis with $k$ elements of $R=\langle r_1\rangle \oplus \ldots \oplus \langle r_{k}\rangle$ such that for all $0 \leq i <k$,
\begin{enumerate}
\item The image $\bar{h}_{k-i}$ of $h_{k-i}$ in $R/\langle r_k,\ldots, r_{k-i+1}\rangle$ equals $1 \cdot r_{k-i}+ \ldots$.
\item The morphism induced on the quotient by the componentwise inclusion
\begin{displaymath}
\langle h_1\rangle \oplus \ldots \oplus \langle h_{k-i-1} \rangle \to R/\langle r_k, \ldots, r_{k-i} \rangle
\end{displaymath}
is surjective.
\end{enumerate}
\end{lemma}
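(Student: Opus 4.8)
The plan is to argue by induction on $k$ (the minimal number of generators of $R$), following the strategy of Lemma~\ref{tecnicop}: at each step one peels off a single basis element by passing to a quotient. The whole proof thus reduces to the following one-step claim. Given a minimal generating set $\{h_1,\dots,h_k\}$ of $R$, there is a direct summand $R=\langle r_k\rangle\oplus R'$ generated by a single element $r_k$ such that $h_k-r_k\in R'$ (so that the coefficient of $r_k$ in $h_k$ is $1$) and the images $\bar h_1,\dots,\bar h_{k-1}$ generate $R/\langle r_k\rangle$. Granting the claim, the quotient $R/\langle r_k\rangle\cong R'$ has minimal number of generators equal to $k-1$ (quotienting by a cyclic subgroup lowers this number by at most one, while $k-1$ elements visibly generate it), so $\bar h_1,\dots,\bar h_{k-1}$ is again a minimal generating set; the inductive hypothesis then produces a basis $r_1,\dots,r_{k-1}$ of $R'$ together with the asserted reordering and properties, and lifting it and adjoining $r_k$ finishes the induction. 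Here conditions (1) and (2) for $i=0$ are exactly the content of the claim, and those for $i\geq 1$ are inherited from the quotient $R/\langle r_k\rangle$.

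To prove the claim I would work one prime at a time, using the primary decomposition $R=\bigoplus_p R_p$ and writing $V_p:=R_p/pR_p$. For each $p$ I choose a direct summand generator $r^{(p)}$ of $R_p$ (possibly zero) and set $r_k:=\sum_p r^{(p)}$; since a decomposition into a direct summand prime by prime glues to a direct summand of $R$, and the $r^{(p)}$ have coprime orders, $\langle r_k\rangle=\bigoplus_p\langle r^{(p)}\rangle$ is a direct summand and $h_k-r_k=\sum_p(h_k^{(p)}-r^{(p)})$. The choice is dictated by whether the $p$-component $h_k^{(p)}$ survives in $V_p$. If $h_k^{(p)}\in pR_p$ (in particular if it vanishes) I take $r^{(p)}=0$; then $R_p/\langle r^{(p)}\rangle=R_p$, and because the class of $h_k^{(p)}$ in $V_p$ is zero while all $k$ classes span $V_p$, the remaining classes already span $V_p$, hence the remaining $h_j^{(p)}$ generate $R_p$ by Nakayama. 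If instead $h_k^{(p)}\notin pR_p$, I fix a basis $R_p=\bigoplus_i\langle e_i\rangle$, write $h_k^{(p)}=\sum_i d_i e_i$, let $S=\{i:\ p\nmid d_i\}$ (nonempty), and set $r^{(p)}:=\sum_{i\in S}d_i e_i$.

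The technical heart is verifying that this $r^{(p)}$ works. Choosing $i^{\ast}\in S$ of maximal order, the set $\{r^{(p)}\}\cup\{e_i\}_{i\neq i^{\ast}}$ is again a basis of $R_p$ (the change of variables is unipotent because $d_{i^{\ast}}$ is a unit), so $R_p=\langle r^{(p)}\rangle\oplus W$ with $W=\bigoplus_{i\neq i^{\ast}}\langle e_i\rangle$. By construction $h_k^{(p)}-r^{(p)}=\sum_{i\notin S}d_i e_i\in W$, which gives coefficient $1$; moreover $r^{(p)}$ has the same image as $h_k^{(p)}$ in $V_p$, so the classes of the remaining $h_j^{(p)}$ span $V_p/\langle\overline{h_k^{(p)}}\rangle$ and therefore generate $R_p/\langle r^{(p)}\rangle$ by Nakayama. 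I expect this step to be the main obstacle, and it is precisely where the conclusion is forced to be weaker than in Lemma~\ref{tecnicop}: when $R$ is not a $p$-group the component $h_k^{(p)}$ need not itself generate a direct summand of $R_p$ (for instance $h_k^{(p)}=(p,1)$ in $\mathbb{Z}/p^3\oplus\mathbb{Z}/p$), so one cannot take $r^{(p)}=h_k^{(p)}$ and must replace it by its \emph{top layer} $r^{(p)}$. The clean point, which makes the argument succeed for \emph{any} choice of the index being peeled off, is that the only way the construction could fail at a prime $p$ is to have $h_k^{(p)}\in pR_p$ while the remaining components fail to span $V_p$, and this case is vacuous since $h_k^{(p)}\in pR_p$ already forces the others to span. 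This is in contrast with Lemma~\ref{tecnicop}, where the selection of a maximal-order generator is essential and no such freedom is available.
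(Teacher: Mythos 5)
Your proof is correct, but it takes a genuinely different route from the paper's. The paper also reduces to the primary decomposition $R=\bigoplus_p R_p$, but it proceeds in rounds: in each round, for \emph{each} prime $p$ it selects the generator $h_j$ whose $p$-component has \emph{maximal order} in $R_p$ (so that this component generates a direct summand for free), bundles these maximal-order components across primes into new basis vectors $r_j$ (possibly several per round), quotients, and repeats; the reordering in the statement is forced by this selection. You instead peel off a single, \emph{arbitrarily chosen} generator per induction step, and compensate for the fact that an arbitrary component $h_k^{(p)}$ need not generate a direct summand by replacing it with its top layer $r^{(p)}=\sum_{i\in S}d_ie_i$, which does generate a direct summand (your maximal-order choice of $i^{\ast}$ within $S$ is exactly what makes the change of basis well defined) and agrees with $h_k^{(p)}$ modulo $pR_p$, so the Burnside basis theorem still yields generation of the quotient by the remaining elements. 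Your route buys a slightly stronger statement (no reordering is ever needed, and the $r^{(p)}$ depend on a chosen basis of $R_p$ rather than on a maximality comparison among the $h_j$), at the cost of a basis-dependent construction; the paper's route is basis-free at each prime and produces several basis vectors per round, but genuinely needs the reordering. Two small points you should make explicit: the case $r_k=0$ (all $h_k^{(p)}\in pR_p$) cannot occur, since your own generation argument would then make $h_1,\ldots,h_{k-1}$ generate $R$, contradicting minimality; and the change of variables $e_{i^{\ast}}\mapsto r^{(p)}$ is not literally unipotent --- its reduction mod $p$ has determinant $d_{i^{\ast}}\in\mathbb{F}_p^{\times}$, which is what you actually use.
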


\begin{proof}
 Let us decompose $R$ in $p$-groups:
\begin{displaymath}
R^{(1)}:=R= = R_{p_1}^{(1)} \oplus \ldots \oplus R_{p_{s_1}}^{(1)},
\end{displaymath}
and call $\pi_i^1$ the projection onto the $i$-th factor. There exists $\sigma_1 \colon [s_1] \to [k]$, such that for all $i \in [s_1]$, when $\sigma_1(i)=j$, $\pi_i^1(h_j)$ has maximal order in $G_{p_i}$. For all $j \in \sigma_1([s_1])$, we define \begin{displaymath} r_j:= \sum_{ \sigma_1(i)=j} \pi_i^1(h_{j}).\end{displaymath}  Being an element of maximal order, each $r_j$ generates a direct summand of $R$.

Now we call $R^{(2)}$ the quotient of $R=R^{(1)}$ by the group generated by the $r_j$ for all $j \in \sigma_1([s_1])$, and consider the minimal set of generators of $R^{(2)}$ chosen as a subset of $\{h_j\}$ with $j \in [k] \setminus \sigma_1([s_1])$. We can apply the same procedure, and decompose $R^{(2)}$  in $p$-groups
\begin{displaymath}
R^{(2)} = R_{p_1}^{(2)} \oplus \ldots \oplus R_{p_{s_2}}^{(2)}.
\end{displaymath}
Let $\pi_i^2$ be the projection onto the $i$-th factor: there exists $\sigma_2 \colon [s_2] \to [k]$, with $\sigma_2([s_2])$ disjoint with $\sigma_1([s_1])$ and such that, for all $i \in [s_2]$, $\pi_i^{(2)}(h_j)$ has maximal order in $R^{(2)}$ when $\sigma_2(i)=j$. Again we define, for all $j \in \sigma_2([s_2])$, \begin{displaymath} r_j:= \sum_{ \sigma_1(i)=j} \pi_i^2(h_{j}).\end{displaymath}

We continue this procedure, until it produces the requested basis $r_1, \ldots, r_k$ in a finite number $t$ of steps, such that $k_1 + \ldots + k_t=k$. (This also proves in particular that $k$ must be greater than or equal to the maximum of the ranks of the groups $R_{p_i}$, and smaller than or equal to the sum of the ranks of the groups $R_{p_i}$.)
\end{proof}

From the proof of Lemma \ref{tecnico}, it is evident that the same triangularity result obtained in Lemma \ref{tecnicop} holds when $R$ is a cyclic group.
\begin{corollary} \label{tecnicocyc} Let $R$ be a finite cyclic group, and $\{h_1, \ldots, h_k\}$ a minimal generating subset. Then we can find a basis $R=\langle r_1\rangle \oplus \ldots \oplus \langle r_{k}\rangle$, such that, for all $0 \leq i < k$, the projections of $h_{k-i}$ and of $r_{k-i}$ in the quotient $R/ \langle r_k, \ldots, r_{k-i+1}\rangle$ coincide.
\end{corollary}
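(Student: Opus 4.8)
The plan is to reduce Corollary \ref{tecnicocyc} to the proof of Lemma \ref{tecnico} by observing that a cyclic group is in particular a finite abelian group, so the procedure constructed there applies verbatim, and then to check that in the cyclic case the resulting basis genuinely consists of a single element at each stage, yielding the sharper triangularity conclusion rather than the weaker ``$1 \cdot r_{k-i} + \ldots$'' of Lemma \ref{tecnico}. Concretely, I would first decompose $R$ into its $p$-primary components $R = R_{p_1} \oplus \ldots \oplus R_{p_s}$. The key structural fact about a \emph{cyclic} group here is that each $p$-primary component $R_{p_i}$ is itself cyclic, hence of rank one.

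The main step is then to track what the algorithm of Lemma \ref{tecnico} does under this cyclic hypothesis. At the first stage, for each prime $p_i$ there is a (unique, by minimality) index $j$ for which $\pi_i(h_j)$ has maximal order in $R_{p_i}$; since each $R_{p_i}$ is cyclic of rank one, ``maximal order'' means the projection \emph{generates} $R_{p_i}$. The element $r_j = \sum_{\sigma(i)=j}\pi_i(h_j)$ is assembled from these generators, and I would verify that $r_j$ differs from the corresponding $h_j$ only by components lying in the already-handled summands; in the cyclic setting this is exactly the assertion that the projection of $h_{k-i}$ and of $r_{k-i}$ into $R/\langle r_k,\ldots,r_{k-i+1}\rangle$ \emph{coincide}, not merely that the leading coefficient is $1$. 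The point is that because each quotient $R/\langle r_k,\ldots,r_{k-i+1}\rangle$ is again cyclic, the element realizing the maximal order in that quotient is forced to agree with its designated generator up to the part already quotiented out.

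The hard part, such as it is, will be bookkeeping rather than any genuine obstacle: one must confirm that the successive quotients remain cyclic (clear, since quotients of cyclic groups are cyclic) and that the ``maximal order'' element selected at each stage maps to a generator of the current cyclic quotient, so that the projection of $h_{k-i}$ and $r_{k-i}$ really agree rather than differing by a nontrivial multiple. I would emphasize that the proof is essentially a remark: the general algorithm of Lemma \ref{tecnico} specializes to the triangular output of Lemma \ref{tecnicop} precisely when every intermediate quotient is cyclic, which holds automatically for a cyclic $R$. Thus the corollary follows by inspecting the construction in the proof of Lemma \ref{tecnico}, and no new argument is required beyond noting that rank-one-ness of each $R_{p_i}$ upgrades the leading coefficient statement to an equality of projections.
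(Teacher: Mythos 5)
The step you defer to verification --- that ``$r_j$ differs from the corresponding $h_j$ only by components lying in the already-handled summands'' --- is precisely where the argument breaks down. When $R$ is cyclic, every $p$-primary component $R_{p_i}$ has rank one, so in the construction of Lemma \ref{tecnico} the map $\sigma_1$ already assigns \emph{all} of the primary components at the first stage (minimality of $\{h_1,\ldots,h_k\}$ forces $\sigma_1$ to be surjective onto $[k]$), the algorithm terminates after a single pass, and there are no ``already-handled summands'': the discrepancy $h_j-r_j=\sum_{\sigma_1(i)\neq j}\pi_i^1(h_j)$ lies in $\bigoplus_{j'\neq j}\langle r_{j'}\rangle$, with contributions from summands attached to basis vectors coming \emph{before} $r_j$ as well as after, in any ordering. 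Your closing claim that cyclicity of the successive quotients ``forces'' the two projections to agree therefore has no content, because nothing has been quotiented out at the moment the $r_j$ are defined.

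Concretely, take $R=\mathbb{Z}/36\mathbb{Z}\cong\mathbb{Z}/4\mathbb{Z}\oplus\mathbb{Z}/9\mathbb{Z}$ with the minimal generating subset $h_1=10=(2,1)$, $h_2=21=(1,3)$. The construction of Lemma \ref{tecnico} gives $r_1=(0,1)$, $r_2=(1,0)$, and $h_2=r_2+3r_1$, $h_1=2r_2+r_1$: the conclusion of Lemma \ref{tecnico} holds (leading coefficient $1$ modulo the later $r$'s), but the condition of the corollary for $i=0$, namely $h_k=r_k$ in $R$, fails for \emph{every} choice of basis and every reordering, since a basis vector of a two-term decomposition of $\mathbb{Z}/36\mathbb{Z}$ must have order $4$ or $9$, while $o(h_1)=18$ and $o(h_2)=12$. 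So the full triangularity of Lemma \ref{tecnicop} does not carry over to cyclic groups; only the weaker conclusion of Lemma \ref{tecnico} survives. In fairness, the paper offers no more of a proof than you do --- it simply asserts that the statement is evident from the proof of Lemma \ref{tecnico} --- but your write-up makes the unjustified step explicit, and that step is false as stated.
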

From the construction of Lemma \ref{tecnico}, we also deduce the following corollary, which is crucial in the proof of Theorem \ref{fondamentale} to show that we must have an extra marked point, which we can then fix as the origin of the elliptic curve $C$.
\begin{corollary} \label{tecnicopunto} Let $R$ be a finite abelian group, and $\{ h_1, \ldots, h_k \}$ a minimal set of generators  and $R=\langle r_1\rangle \oplus \ldots \oplus \langle r_{k}\rangle$ be the basis constructed in Lemma \ref{tecnico}, and write $h_j= \sum_i b_{ij} r_i$, with $0 \leq b_{ij} < o(r_i)$. Then the equations \begin{displaymath} \sum_j b_{{i}j} \equiv 0 \mod (r_i)\end{displaymath} are not satisfied for all $1 \leq i \leq k$.
\end{corollary}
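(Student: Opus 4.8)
The plan is to recast the stated system of congruences as a single, basis-free condition on the generators, and then to rule that condition out using only the minimality of the generating set. Writing each generator in the chosen basis as $h_j = \sum_i b_{ij} r_i$ and summing over $j$, one gets $\sum_{j=1}^k h_j = \sum_i \bigl( \sum_{j=1}^k b_{ij}\bigr) r_i$ in $R = \langle r_1\rangle \oplus \ldots \oplus \langle r_k\rangle$; hence the $i$-th coordinate of $\sum_j h_j$ is exactly $\sum_j b_{ij}$ reduced modulo $o(r_i)$. Therefore the congruences $\sum_j b_{ij} \equiv 0 \pmod{o(r_i)}$ hold simultaneously for every $1 \le i \le k$ if and only if $\sum_{j=1}^k h_j = 0$ in $R$. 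The corollary is thus equivalent to the assertion that $\sum_{j=1}^k h_j \neq 0$.

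First I would prove this reformulated statement directly, by contradiction. If $\sum_{j=1}^k h_j = 0$, then $h_k = -\sum_{j=1}^{k-1} h_j$ lies in $\langle h_1, \ldots, h_{k-1}\rangle$, so that $\langle h_1, \ldots, h_{k-1}\rangle = \langle h_1, \ldots, h_k\rangle = R$. This contradicts the hypothesis that $\{h_1, \ldots, h_k\}$ generates $R$ minimally, since the proper subset $\{h_1, \ldots, h_{k-1}\}$ would already generate $R$. Hence $\sum_j h_j \neq 0$, and by the equivalence above at least one of the congruences must fail, which is the claim.

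I do not expect a genuine obstacle here; the argument uses nothing about the specific basis produced in Lemma \ref{tecnico} beyond its existence, and the only mild point is that the contradiction invokes irredundancy of the generating set, which holds under either reading of \virg{minimal} and is in any case guaranteed by the construction. As a consistency check, in the triangular situations of Lemma \ref{tecnicop} and Corollary \ref{tecnicocyc} the failing index is visibly $i = k$: there the bottom row of $(b_{ij})$ is $(0, \ldots, 0, 1)$, so $\sum_j b_{kj} = 1 \not\equiv 0 \pmod{o(r_k)}$ because $o(r_k) > 1$. This is the same obstruction as in the general case, now localized at a single coordinate.
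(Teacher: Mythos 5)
Your proof is correct, and it takes a genuinely different and more elementary route than the paper's. Your key move --- observing that the $k$ congruences hold simultaneously if and only if $\sum_{j=1}^{k} h_j = 0$ in $R = \langle r_1\rangle \oplus \ldots \oplus \langle r_k\rangle$, and that this identity would express $h_k$ in terms of $h_1, \ldots, h_{k-1}$ and so contradict the irredundancy of a minimal generating set --- uses nothing about the particular basis produced by Lemma \ref{tecnico}, so your statement in fact holds for an arbitrary direct-sum decomposition of $R$. The paper argues quite differently: it restricts attention to the basis elements produced in the last step of the construction of Lemma \ref{tecnico}, decomposes $R$ into cyclic groups of prime-power order, and uses the maximal-order property built into that construction to exhibit a prime-power coordinate on which the relevant row sum is a unit plus a sum of non-units, hence nonzero modulo the order. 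That localization pins down specific indices $i$ at which the congruence fails, but this extra precision is never used: the only application (in the proof of Theorem \ref{fondamentale}, to show that the first set of compatibilities \eqref{reducedsplit} has no solutions when $k=n$) needs exactly the reading you adopt, namely that the congruences cannot all hold at once. Your reduction is complete and cleaner than the paper's rather sketchy argument; the only quibble is that your closing consistency check about the bottom row of $(b_{ij})$ rests on an orientation convention for the triangular matrix that the paper itself leaves ambiguous, but that remark is decoration and not part of the proof.
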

\begin{proof}
We prove that the equations are not satisfied for the last $k_t$ elements of the basis we have defined in the proof of Lemma \ref{tecnico}. Let us take one such $h$, and decompose $R$ as a direct sum of cyclic groups of prime power order, generated by certain elements. There is at least one of these elements, let us call it $s$, such that the coordinate of $h$ with respect to $s$ is coprime with $o(s)$, a power of a prime. By the construction of $h$,  the coefficients of all other elements $h_i$ with respect to $s$ are not coprime with $o(s)$: this concludes the proof.
\end{proof}

\begin{center}\textsc{Acknowledgments}
\end{center}
During the preparation of this manuscript, we have benefited from fruitful discussions with Eduardo Esteves, Carel Faber, Barbara Fantechi, Klaus Hulek, Michael L\"onne, Dan Petersen, Flavia Poma, Orsola Tommasi, and Fabio Tonini. The author was partly supported by the DFG project Hu 337/6-2.

\end{document}